\documentclass{article}

\usepackage[margin=1in]{geometry}
\usepackage{amssymb}

\usepackage{amsmath}

\usepackage{amsthm}
\usepackage{parskip}
\usepackage{setspace}
\begin{document}
\newtheorem{thm}{Theorem}[section]
\newtheorem{lem}[thm]{Lemma}
\newtheorem{cor}[thm]{Corollary}
\newtheorem{prop}[thm]{Proposition}
\newtheorem{defi}[thm]{Definition}

\title{Numerical Information of Hilbert Space Operators}

\author{Dylan M. Johnson \\ \small University of Iowa} 
\date{}

\maketitle

\begin{abstract}

For a Hilbert space operator $A\in B(H)$, the numerical range map of $A$, denoted $\phi_A$ is the map sending unit vectors $x$ in $H$ to $\langle Ax,x\rangle$. From this map we define the Numerical Information of $A$, denoted $\Sigma_A$, which is the sigma algebra induced on the unit sphere by the numerical range map. One of the main results of the paper are that if $A,B\in B(H)$, and $\phi_B=f\circ \phi_A$, then $f$ is affine. Our other main result is showing that $\Sigma_B\subset\Sigma_A$ if and only if $B=\alpha Re(A)+\beta Im(A)+\gamma I$ for some complex numbers $\alpha,\beta,\gamma$. 
\end{abstract}

47A12

\section{Introduction}
Let $H$ be a Hilbert space with inner product $\langle \cdot,\cdot\rangle$ and $B(H)$ denote the set of bounded linear operators on $H$. 
\begin{defi}\label{numrange}The numerical range of an operator $A$, denoted $W(A)$, is the set
\begin{equation*}
    W(A):=\{\langle Ax,x\rangle \;|\; ||x||=1\}.
\end{equation*}
\end{defi}
\begin{defi}\label{nummap}The numerical range map associated with $A$, denoted $\phi_A$ is defined as 
\begin{align*}
    \phi_A(x):S_H&\to \mathbb{C}\\
    x&\mapsto \langle Ax,x\rangle,
\end{align*}
where $S_H$ denotes the unit sphere in $H$.\end{defi}
The joint numerical range of a finite collection of operators $A_1,A_2,...,A_n\in B(H)$ is the set
\begin{equation*}
    W(A_1,...,A_n):=\{(\phi_{A_1}(x),...,\phi_{A_n}(x))\;|\;x\in S_H\}.
\end{equation*}
See \cite{joint} for a quality introduction to joint numerical ranges. \\\\
Another object of interest is the Numerical Shadow \cite{numshadow}, (or Numerical Measure \cite{nummeasure}). For a finite-dimensional Hilbert space, the Numerical Shadow of a matrix is the pushfoward of the Haar measure on $S_H$ induced by the numerical range map. For a measurable subset $U\in W(A)$ we define
\begin{equation*}
    \mu_A(U):=h(\phi_A^{-1}(U)),
\end{equation*}
where $h$ denotes the Haar measure on $S_H$. Joint numerical shadows and their applications were considered in \cite{jointshadow}.\\\\
Motivated by the numerical shadow construction, for each operator, we can associate the sigma algebra on the unit sphere induced by the numerical range map. Unlike the numerical shadow, the numerical information is defined for infinite-dimensional operators. Although we do not consider unbounded operators in this paper, the numerical information construction can be extended to include them. The numerical range of an unbounded operator is the whole complex plane, but the numerical information of two unbounded operators can be distinct.
\\\\Understanding these sigma-algebras should be helpful in solving problems in joint numerical ranges, numerical shadows, higher-rank numerical ranges \cite{compress}, inverse numerical range problems \cite{inverseproblem}, geometric problems, and quantum information problems \cite{quantum}.\\\\
One of the main results of the present work is giving necessary and sufficient conditions for the sigma algebra induced by one operator to be contained in the sigma algebra of another.\\\\
The layout of the paper is as follows. In Section \ref{tools} we provide the essential background on the numerical range map that we will use later in our proofs. Using the results of Sections \ref{preorder} and \ref{2dcase}, in Section \ref{conn} we prove that if $\phi_B=f\circ \phi_A$ for some $f:W(A)\to W(B)$, then $f$ is affine.  In Section \ref{numinfo}, we define the numerical information of an operator. This is the sigma algebra induced by the numerical range map. From our result on the connecting maps, we give necessary and sufficient conditions for the numerical information of one operator to be contained in the information of another. 
\subsection{Tools and Basic Results}\label{tools}
In this section, we present some basic results about numerical ranges and numerical range maps that will be used in later proofs. \\\\
The following is one of the most well-known results in the study of numerical ranges.\\\\
\textbf{Toeplitz-Hausdorff Theorem}\\
The numerical range of an operator is a convex set. \\
Proof: See \cite{yuangao}.
\begin{prop}\label{basics} For $A,B\in B(H)$ and $\lambda\in\mathbb{C}$\\
i) $\phi_{\lambda A}=\lambda\phi_A$\\
ii) $\phi_{A+B}=\phi_A+\phi_B$\\
iii) $\phi_{A}(x)=\lambda$ for all $x\in S_H$ if and only if $A=\lambda I$\\
iv) $W(A)\subset \mathbb{R}\iff A=A^*$\\
v) $W(U^*AU)=W(A)$ for all unitary $U$
\end{prop}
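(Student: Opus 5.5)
Parts (i) and (ii) follow directly from linearity of the inner product in its first slot. For $x\in S_H$ we have $\langle \lambda Ax,x\rangle=\lambda\langle Ax,x\rangle$ and $\langle (A+B)x,x\rangle=\langle Ax,x\rangle+\langle Bx,x\rangle$. These give $\phi_{\lambda A}=\lambda\phi_A$ and $\phi_{A+B}=\phi_A+\phi_B$ pointwise on $S_H$.

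The substantive ingredient for (iii) and (iv) is the standard fact that, over a complex Hilbert space, an operator $T$ with $\langle Tx,x\rangle=0$ for all $x\in H$ is zero. I would prove this by polarization. For all $x,y\in H$,
\begin{equation*}
4\langle Tx,y\rangle=\sum_{k=0}^{3} i^k\langle T(x+i^k y),x+i^k y\rangle ,
\end{equation*}
and the hypothesis kills every term on the right, so $\langle Tx,y\rangle=0$ for all $x,y$, hence $T=0$. Since the maps $\phi$ are defined only on $S_H$, I first extend the hypothesis by homogeneity: $\langle Tx,x\rangle=\|x\|^2\langle Tu,u\rangle$ with $u=x/\|x\|$. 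This step, which relies on complex scalars, is the only real obstacle; the rest is bookkeeping.

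For (iii), the direction $A=\lambda I\Rightarrow\phi_A\equiv\lambda$ is immediate since $\langle x,x\rangle=1$ on $S_H$. Conversely, if $\phi_A\equiv\lambda$, then by (i), (ii) we get $\phi_{A-\lambda I}\equiv 0$ on $S_H$. Homogeneity extends this to all of $H$, and the polarization fact gives $A=\lambda I$.

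For (iv), if $A=A^*$ then $\langle Ax,x\rangle=\langle x,Ax\rangle=\overline{\langle Ax,x\rangle}$, so $W(A)\subset\mathbb{R}$. Conversely, $\langle A^*x,x\rangle=\overline{\langle Ax,x\rangle}$ always holds, so $W(A)\subset\mathbb{R}$ gives $\phi_{A-A^*}\equiv 0$. Applying (iii) with $\lambda=0$ yields $A=A^*$.

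For (v), $\langle U^*AUx,x\rangle=\langle AUx,Ux\rangle$. Since $U$ is unitary, $x\mapsto Ux$ is a bijection of $S_H$ onto itself, so $\phi_{U^*AU}=\phi_A\circ U$ has the same image as $\phi_A$, i.e. $W(U^*AU)=W(A)$.
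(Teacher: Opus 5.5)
Your proof is correct and complete. The paper gives no argument of its own, only a citation to a standard reference, and your route (linearity for (i)--(ii), polarization to get $\langle Tx,x\rangle\equiv 0\Rightarrow T=0$ for (iii)--(iv), and $U(S_H)=S_H$ for (v)) is the standard one.

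One small correction to your commentary: the homogeneity extension from $S_H$ to $H$ uses only positive real scaling. It is the polarization identity, through the $i^k$ terms, that genuinely needs complex scalars; over a real space, rotation by $\pi/2$ satisfies $\langle Tx,x\rangle=0$ for all $x$ without being zero.
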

Proof: See \cite{yuangao}.\\\\
The numerical range map can be broken up into a real and imaginary part.
\begin{cor}\label{decomp} Using the above properties we have  \begin{equation*}    
\phi_A(x)=\phi_{Re(A)}(x)+i\phi_{Im(A)}(x)\end{equation*} where $Re(A):= \frac{A+A^*}{2}$ and $Im(A):=\frac{A-A^*}{2i}$
\end{cor}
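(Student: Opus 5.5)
The plan is to write $A$ as the sum of its real and imaginary parts and then apply the linearity properties of Proposition \ref{basics} to the numerical range map.

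\textbf{Step 1: the decomposition.} From the definitions $Re(A)=\frac{A+A^*}{2}$ and $Im(A)=\frac{A-A^*}{2i}$ we have
\begin{equation*}
Re(A)+iIm(A)=\frac{A+A^*}{2}+\frac{A-A^*}{2}=A.
\end{equation*}

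\textbf{Step 2: apply Proposition \ref{basics}.} Using part ii) and then part i) with $\lambda=i$, for every $x\in S_H$ we get
\begin{equation*}
\phi_A(x)=\phi_{Re(A)+iIm(A)}(x)=\phi_{Re(A)}(x)+\phi_{iIm(A)}(x)=\phi_{Re(A)}(x)+i\phi_{Im(A)}(x),
\end{equation*}
which is the identity asserted in the corollary.

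\textbf{Step 3: identify real and imaginary parts.} To justify the name ``decomposition'', we check that $\phi_{Re(A)}(x)$ and $\phi_{Im(A)}(x)$ are real. Both $Re(A)$ and $Im(A)$ are self-adjoint, since $(A+A^*)^*=A+A^*$ and $\left(\frac{A-A^*}{2i}\right)^*=\frac{A^*-A}{-2i}=\frac{A-A^*}{2i}$. By part iv) of Proposition \ref{basics}, their numerical ranges therefore lie in $\mathbb{R}$. Alternatively, one can compute directly: $\langle Tx,x\rangle=\overline{\langle x,Tx\rangle}=\overline{\langle Tx,x\rangle}$ for self-adjoint $T$.

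Hence $\phi_{Re(A)}=Re\,\phi_A$ and $\phi_{Im(A)}=Im\,\phi_A$ pointwise. There is no real obstacle here. The only care needed is the sign bookkeeping for the adjoint of the scalar $\frac{1}{2i}$, namely $\overline{1/(2i)}=-1/(2i)$.
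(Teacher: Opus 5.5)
Your proof is correct and follows the paper's argument, which is left implicit in the phrase ``using the above properties'': parts i) and ii) of Proposition \ref{basics} are applied to the identity $A=Re(A)+iIm(A)$, exactly as in your Steps 1--2. Your Step 3 is not needed for the displayed identity itself. It shows that $Re(A)$ and $Im(A)$ are self-adjoint, so $\phi_{Re(A)}$ and $\phi_{Im(A)}$ really are the real and imaginary parts of $\phi_A$, and this is worth including because the paper tacitly uses it later (e.g.\ in Lemma \ref{velements} and Corollary \ref{lincomb}).
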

\begin{lem}The numerical range map is invariant under scaling:\\
\begin{equation*}
    \phi_A(\lambda x)=\phi_A(x)\; \text{for}\; |\lambda|=1
\end{equation*}\end{lem}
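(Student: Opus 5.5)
The claim follows directly from the definition of $\phi_A$ (Definition \ref{nummap}), using sesquilinearity of the inner product and linearity of $A$. First we check that $\lambda x$ lies in the domain $S_H$: since $\|\lambda x\| = |\lambda|\,\|x\| = \|x\| = 1$, the vector $\lambda x$ is again a unit vector, so $\phi_A(\lambda x)$ is defined.

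Next we compute. By linearity of $A$ we have $A(\lambda x) = \lambda Ax$. The inner product is linear in the first slot and conjugate-linear in the second, which is the convention under which Proposition \ref{basics}(i) holds. Hence
\begin{equation*}
\phi_A(\lambda x)=\langle A(\lambda x),\lambda x\rangle=\lambda\overline{\lambda}\,\langle Ax,x\rangle=|\lambda|^2\phi_A(x)=\phi_A(x).
\end{equation*}

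The only point needing care is the convention for which slot is conjugate-linear. Under the opposite convention the factor is $\overline{\lambda}\lambda$, which is the same real number $|\lambda|^2$, so the result holds either way. No earlier results are needed beyond the definitions. This explains why $\phi_A$ factors through the projective space of unit vectors modulo phase, a fact presumably used later when discussing the induced sigma algebra.
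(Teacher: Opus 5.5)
Your proof is correct. The paper gives no argument of its own here (it only cites an outside reference), and your direct computation $\langle A(\lambda x),\lambda x\rangle=\lambda\overline{\lambda}\langle Ax,x\rangle=|\lambda|^2\phi_A(x)$, together with the check that $\lambda x\in S_H$, is the standard self-contained argument; your remark that the factor is $|\lambda|^2$ under either inner-product convention is also accurate.
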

Proof: See \cite{difftop}.\\\\
In later proofs, we will use the cardinality of the preimages of the numerical range map. When we say that the cardinality of a preimage is finite, we mean a finite number of linearly independent vectors. We acknowledge this using the phrase "up to scaling."\\\\
Compressing operators is an essential tool for many proofs involving the numerical range. 
\begin{defi}\label{compression} Compression of an operator. \end{defi}Let $M$ be a closed subspace of $H$, $P_M$ the orthogonal projection of $H$ onto $M$. For a linear operator $A$, the compression of $A$ to $M$ , denoted $A_M$, is the restriction to $M$ of the operator $P_MA$. 
\begin{lem}\label{complemma}For a closed subspace $M\subset H$, $W(A_M)\subset W(A)$.\end{lem}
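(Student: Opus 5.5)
The plan is to show directly that every point of $W(A_M)$ is already a value of $\phi_A$, by checking that the numerical range map of the compression agrees with $\phi_A$ on the unit sphere of $M$. Concretely, fix $\lambda\in W(A_M)$. By Definition \ref{numrange} applied to $A_M\in B(M)$, there is a vector $x\in M$ with $\|x\|=1$ and $\lambda=\langle A_Mx,x\rangle$, where the inner product is that of $M$, i.e.\ the restriction of the inner product of $H$.

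The key computation uses the definition of the compression, $A_Mx=P_MAx$, together with the self-adjointness of the orthogonal projection $P_M$ and the fact that $P_Mx=x$ for $x\in M$:
\begin{equation*}
\langle A_Mx,x\rangle=\langle P_MAx,x\rangle=\langle Ax,P_Mx\rangle=\langle Ax,x\rangle .
\end{equation*}
Since $M\subset H$ carries the same norm, $x$ also lies in $S_H$, so the right-hand side equals $\phi_A(x)$ and hence belongs to $W(A)$. This gives $\lambda\in W(A)$. In the language of Definition \ref{nummap}, the computation says $\phi_{A_M}=\phi_A|_{S_M}$ with $S_M\subset S_H$, and the inclusion $W(A_M)=\phi_A(S_M)\subset\phi_A(S_H)=W(A)$ is immediate.

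There is no real obstacle. The only points needing care are two facts about $P_M$, and both rely on $M$ being closed. First, $P_M$ is self-adjoint, i.e.\ $P_M=P_M^*$. Second, it acts as the identity on $M$. Closedness is exactly what makes the orthogonal projection well defined via $H=M\oplus M^\perp$. One should also note that $A_M$ is a bounded operator on $M$, because $\|P_MA\|\le\|A\|$, so that $W(A_M)$ makes sense within the framework of Definition \ref{numrange}.
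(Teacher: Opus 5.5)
Your proposal is correct and follows the paper's own proof: the paper makes the same computation $\langle P_MAx,x\rangle=\langle Ax,P_M^*x\rangle=\langle Ax,P_Mx\rangle=\langle Ax,x\rangle$ for unit vectors $x\in M$, so that $\phi_{A_M}$ agrees with $\phi_A$ on $S_M\subset S_H$, and deduces $W(A_M)\subset W(A)$. Your added remarks on why closedness of $M$ and boundedness of $A_M$ matter are accurate; the paper leaves them implicit.
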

Proof: We see that on $M$, $\phi_A(x)=\phi_{P_MA}(x)$. Take $x\in M$, 
    \begin{align*}
        \phi_{P_MA}(x)&=\langle P_MAx,x\rangle\\
        &=\langle Ax,P_M^*x\rangle\\
        &=\langle Ax,P_Mx\rangle\\
        &=\langle Ax,x\rangle\\
        &=\phi_A(x).
    \end{align*}
    From this we conclude that $W(A_M)\subset W(A).$ $\square$ 
\section{Preorder and Vector Space Induced By Connecting Maps}\label{preorder}
In this section, we begin our exploration of maps connecting numerical range maps. 
\begin{defi}\label{relation} We define a relation on $B(H)$, saying that $B\leq A$ if 
\begin{equation*}
    \phi_B=f\circ \phi_A
\end{equation*}where 
\begin{align*}
    f:W(A)&\to W(B)\\
    \langle Ax,x\rangle&\mapsto \langle Bx,x\rangle.
\end{align*}\end{defi}
We will often refer to $f$ as the connecting map from $A$ to $B$. In order for $f$ from above to be well defined we must have that 
\begin{equation*}
    \langle Ax,x\rangle=\langle Ay,y\rangle\text{ implies } \langle Bx,x\rangle=\langle By,y\rangle
\end{equation*}
for any $x,y\in S_H$. We use this fact several times throughout the paper. If we can find $x,y\in S_H$ such that $\phi_A(x)=\phi_A(y)$, but $\phi_B(x)\neq \phi_B(y)$, then $B\not\leq A$.\\\\
Note that a connecting map $f$ must be surjective as $\phi_B(x)=f\circ \phi_A(x)$ for all $x\in S_H$. \\\\
We will not use the following in this work, but it is an interesting fact. 
\begin{prop} The relation $\leq$ is a preorder. \end{prop}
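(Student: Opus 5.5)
To prove that $\leq$ is a preorder on $B(H)$ we must verify reflexivity and transitivity. No antisymmetry is claimed, and indeed it should fail, e.g.\ $A$ and $2A$ dominate each other. The cleanest route is to reformulate the relation as a statement about fibers of the numerical range maps. By the discussion following Definition \ref{relation}, $B\leq A$ holds exactly when the connecting map $f:W(A)\to W(B)$, $\langle Ax,x\rangle\mapsto\langle Bx,x\rangle$, is well defined. This in turn holds exactly when
\begin{equation*}
\phi_A(x)=\phi_A(y)\ \Longrightarrow\ \phi_B(x)=\phi_B(y)\quad\text{for all } x,y\in S_H.
\end{equation*}
Since $W(A)=\phi_A(S_H)$ by Definition \ref{numrange}, every point of $W(A)$ is of the form $\phi_A(x)$, so $f$ is defined on all of $W(A)$. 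It takes values in $W(B)$ because $\phi_B(x)\in W(B)$.

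For reflexivity, take $f=\mathrm{id}_{W(A)}$. Then $\phi_A=\mathrm{id}\circ\phi_A$, and this $f$ is of the required form $\langle Ax,x\rangle\mapsto\langle Ax,x\rangle$, so $A\leq A$.

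For transitivity, suppose $C\leq B$ and $B\leq A$, with connecting maps $g:W(B)\to W(C)$ and $f:W(A)\to W(B)$, so that $\phi_C=g\circ\phi_B$ and $\phi_B=f\circ\phi_A$. Set $h=g\circ f:W(A)\to W(C)$. Then
\begin{equation*}
h\circ\phi_A=g\circ(f\circ\phi_A)=g\circ\phi_B=\phi_C.
\end{equation*}
In particular $h(\langle Ax,x\rangle)=\langle Cx,x\rangle$ for every $x\in S_H$, so $h$ is the connecting map from $A$ to $C$, and therefore $C\leq A$. Equivalently, at the level of fibers: if $\phi_A(x)=\phi_A(y)$, then $\phi_B(x)=\phi_B(y)$, and hence $\phi_C(x)=\phi_C(y)$.

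There is no real obstacle here. The only point needing care is confirming that the composite $g\circ f$ is defined on all of $W(A)$ and lands in $W(C)$. This follows because $f$ maps into $W(B)$, which is the domain of $g$, and $g$ maps into $W(C)$.
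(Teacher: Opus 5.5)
Your proof is correct and follows the paper's approach: reflexivity via the identity map on $W(A)$, and transitivity via the composite $g\circ f$. Your justification that $g\circ f$ is defined, because $f$ takes values in $W(B)$, the domain of $g$, is the precise point needed; the paper instead cites surjectivity of $f$ and $g$, which the composition does not require.
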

Proof: For all $A\in B(H)$, 
\begin{equation*}
    \phi_A=Id\circ \phi_A,
\end{equation*}
where $Id$ denotes the identity map, $x\mapsto x$. Therefore $A\leq A$. \\\\
If $C\leq B$ and $B\leq A$, there exists functions $f,g:\mathbb{C}\to\mathbb{C}$ such that $\phi_B=f\circ \phi_A$ and $\phi_C=g\circ \phi_B$. As both $f$ and $g$ are surjective, we have
\begin{equation*}
    \phi_C=(g\circ f) \circ\phi_A,
\end{equation*}
and $C\leq A$. $\square$
\begin{defi}For all $A\in B(H)$, we consider the set 
\begin{equation*}
    V_A:=\{B\in B(H)\;|\; B\leq A\}.
\end{equation*}\end{defi}
\begin{lem}$V_A$ is an additive vector space. \end{lem}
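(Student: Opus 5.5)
The plan is to show that $V_A$ is a subspace of $B(H)$: it contains $0$, and it is closed under addition and under scalar multiplication. Throughout I will use the well-definedness criterion stated after Definition \ref{relation}: $B\leq A$ holds if and only if, for all $x,y\in S_H$, $\phi_A(x)=\phi_A(y)$ implies $\phi_B(x)=\phi_B(y)$. Given this implication, the map $f(\phi_A(x)):=\phi_B(x)$ is well defined on $W(A)$, and its image is exactly $W(B)$. So it is a legitimate connecting map in the sense of Definition \ref{relation}, and no separate surjectivity check is needed.

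First, $0\in V_A$. We have $\phi_0\equiv 0$, so the constant map $f\equiv 0$ on $W(A)$ works, and $W(0)=\{0\}$. (The same argument puts every $\lambda I$ in $V_A$, by Proposition \ref{basics} iii.)

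Next, closure under addition and scaling. Suppose $B,C\in V_A$ with connecting maps $f$ and $g$, and let $\lambda\in\mathbb{C}$. By Proposition \ref{basics} i) and ii),
\begin{equation*}
\phi_{B+C}=\phi_B+\phi_C=(f+g)\circ\phi_A,\qquad \phi_{\lambda B}=\lambda\phi_B=(\lambda f)\circ\phi_A .
\end{equation*}
Equivalently, in terms of the criterion: if $\phi_A(x)=\phi_A(y)$, then $\phi_B(x)=\phi_B(y)$ and $\phi_C(x)=\phi_C(y)$. Adding these gives $\phi_{B+C}(x)=\phi_{B+C}(y)$, and scaling gives $\phi_{\lambda B}(x)=\phi_{\lambda B}(y)$. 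Hence $B+C\in V_A$ and $\lambda B\in V_A$.

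The vector space axioms themselves (associativity, distributivity, and so on) are inherited from $B(H)$, because $V_A$ is a subset of $B(H)$ closed under its operations.

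The only delicate point is a formal one. Definition \ref{relation} insists that $f$ map $W(A)$ onto $W(B)$, so one must check that $f+g$ really lands onto $W(B+C)$ rather than merely into $\mathbb{C}$. This is handled by defining the new connecting map as $\phi_A(x)\mapsto\phi_{B+C}(x)$, as in the first paragraph. Its range is then $W(B+C)$ by construction, and it agrees with $f+g$ on $W(A)$. The same construction handles $\lambda f$.
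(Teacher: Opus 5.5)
Your proof is correct and takes the same approach as the paper. The paper shows non-emptiness via $A\leq A$ and then computes $\phi_{B+\lambda C}=(f+\lambda g)\circ\phi_A$ in a single step. Your separate check that $0\in V_A$, and your remark that the connecting map really has image $W(B+C)$, are small extra precisions that the paper leaves implicit.
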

Proof: We have shown that $A\leq A$, hence $V_A$ is non-empty.
\\\\
Given $B,C\in V_A,$ with respective connecting maps $f\;\&\; g$, $\lambda\in \mathbb{C}$, and $x\in S_H$, we have 
\begin{align*}
    \phi_{B+\lambda C}(x)&=\langle (B+\lambda C)x,x\rangle\\
    &=\langle Bx,x\rangle +\lambda\langle Cx,x\rangle\\
    &=\phi_B(x)+\lambda\phi_C(x)\\
    &=f\circ\phi_A(x)+\lambda g\circ\phi_A(x)\\
    &=(f+\lambda g)\circ\phi_A(x).
\end{align*}
Hence $B+\lambda C\leq A$. $\square$
\begin{lem}\label{velements} The operators $Re(A),Im(A),\text{ and }I$ are contained in $V_A$.\end{lem}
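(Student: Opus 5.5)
The plan is to exhibit explicit connecting maps in the sense of Definition \ref{relation}, one for each of the three operators, and check that each is well defined on $W(A)$ with the correct codomain.

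For $Re(A)$ and $Im(A)$, I will use Corollary \ref{decomp}. Since $Re(A)$ and $Im(A)$ are self-adjoint, Proposition \ref{basics} iv) gives that $\phi_{Re(A)}(x)$ and $\phi_{Im(A)}(x)$ are real for every $x\in S_H$. Hence the identity $\phi_A(x)=\phi_{Re(A)}(x)+i\phi_{Im(A)}(x)$ is exactly the decomposition of the complex number $\phi_A(x)$ into real and imaginary parts. That is,
\begin{equation*}
\phi_{Re(A)}(x)=\mathrm{Re}\,\phi_A(x),\qquad \phi_{Im(A)}(x)=\mathrm{Im}\,\phi_A(x).
\end{equation*}
So I will take $f(z)=\mathrm{Re}\,z$ and $g(z)=\mathrm{Im}\,z$, restricted to $W(A)$. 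These are honest functions of $z$, so well-definedness is automatic: if $\phi_A(x)=\phi_A(y)$, then the real and imaginary parts agree. Their images are $W(Re(A))$ and $W(Im(A))$ respectively, because every point of those ranges is attained at some $x\in S_H$, and $f(\phi_A(x))$ is that point.

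For $I$, note that $\phi_I(x)=\langle x,x\rangle=1$ for all $x\in S_H$, so $W(I)=\{1\}$. I will take the constant map $h(z)=1$ on $W(A)$. Then $\phi_I=h\circ\phi_A$ trivially, and $h$ is surjective onto $W(I)$.

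There is no real obstacle here. The only point needing care is the self-adjointness of $Re(A)$ and $Im(A)$, since that is what lets us read the real and imaginary parts off $\phi_A$. For $Re(A)$ this is immediate. For $Im(A)$, one computes $\left(\frac{A-A^*}{2i}\right)^*=\frac{A^*-A}{-2i}=\frac{A-A^*}{2i}$. It is also worth remarking that, by the previous lemma that $V_A$ is a vector space, it follows that every $\alpha Re(A)+\beta Im(A)+\gamma I$ lies in $V_A$. This is the easy direction of the paper's main result.
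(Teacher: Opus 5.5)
Your proposal is correct and matches the paper's proof: via Corollary \ref{decomp}, the paper writes $\phi_{Re(A)}$, $\phi_{Im(A)}$ and $\phi_I$ as the real-part projection, the imaginary-part projection and the constant map $1$, each composed with $\phi_A$. Your explicit check that $Re(A)$ and $Im(A)$ are self-adjoint fills in a step the paper leaves implicit, namely that this decomposition really is the splitting of $\phi_A(x)$ into real and imaginary parts. Writing $g(z)=\mathrm{Im}\,z$ also avoids the paper's awkward notation $-i\pi_y$.
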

Proof: Considering Corollary \ref{decomp}, we see
\begin{align*}
    \phi_{Re(A)}&=\pi_x\circ \phi_A\\
    \phi_{Im(A)}&=-i\pi_y\circ \phi_A\\
    \phi_{I}&=1_\mathbb{C}\circ \phi_A,
    \end{align*}
    where $\pi_x$ denotes projection onto the $x$-coordinate and $1_{\mathbb{C}}$ is the constant map $1$. $\square$
        \section{The $M_2(\mathbb{C})$ Case}\label{2dcase}
    Similar to a popular proof for the Toeplitz-Hausdorff theorem, we calculate all of the details for 2x2 matrices, and our main result follows from compressing to a two dimensional subspace. 
    \begin{prop}\label{dimension} For $A\in M_2(\mathbb{C})$, the dimension of $V_A$ is either 1,2 or 3. \end{prop}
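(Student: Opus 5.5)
We must show $1\le \dim V_A\le 3$ for $A\in M_2(\mathbb{C})$. The lower bound is immediate from Lemma \ref{velements}, since $I\in V_A$ and $I\neq 0$. For the upper bound, the plan is to identify $\phi_B$ with a linear functional on a real three-dimensional space. The key tool is the Bloch-sphere description of $2\times 2$ numerical range maps. Writing $xx^*=\frac12(I+s_1\sigma_1+s_2\sigma_2+s_3\sigma_3)$ with $s=(s_1,s_2,s_3)\in S^2$, we get $\phi_B(x)=\operatorname{tr}(Bxx^*)$. This is an affine function of $s$, and every point of $S^2$ occurs. So $\phi_B$ is determined by the vector $(\operatorname{tr}B,\operatorname{tr}B\sigma_1,\operatorname{tr}B\sigma_2,\operatorname{tr}B\sigma_3)\in\mathbb{C}^4$.

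First we normalize. By Proposition \ref{basics}(v), unitary conjugation of both $A$ and $B$ preserves the relation $\leq$, since it just reparametrizes $S_H$. Subtracting scalar multiples of $I$ also does not affect membership in $V_A$, because $V_A$ is a vector space containing $I$. So we may work with $\mathrm{Re}(A)$ and $\mathrm{Im}(A)$ traceless. In Bloch coordinates their numerical range maps are $s\mapsto \langle a,s\rangle$ and $s\mapsto\langle b,s\rangle$ for real vectors $a,b\in\mathbb{R}^3$. Then $\phi_A(s)=\langle a,s\rangle+i\langle b,s\rangle$ up to a constant.

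Now take $B\in V_A$, and decompose $B$ into its real and imaginary parts. By linearity, $\phi_{\mathrm{Re}B}=\mathrm{Re}\circ\phi_B$ factors through $\phi_A$, so each of these parts lies in $V_A$ and it suffices to treat Hermitian $B$ with traceless part $c\in\mathbb{R}^3$. The condition $B\le A$ says that $\langle c,s\rangle$ is constant on every fiber $\{s\in S^2:\langle a,s\rangle=t_1,\ \langle b,s\rangle=t_2\}$. We then split into cases by $r=\dim\operatorname{span}(a,b)$.

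\textbf{Case $r=0$.} $A$ is scalar, every fiber is all of $S^2$, so $c=0$ and $\dim V_A=1$.

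\textbf{Case $r=1$.} Generic fibers are circles $\{\langle a,s\rangle=t\}$ of positive radius. Constancy of $\langle c,s\rangle$ on such a circle forces $c\parallel a$. So $V_A=\mathrm{span}\{I,A\}$ and $\dim V_A=2$.

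\textbf{Case $r=2$.} Fibers consist of the at most two points $s=u\pm \lambda n$, where $u\in\mathrm{span}(a,b)$ and $n\perp a,b$. Constancy of $\langle c,s\rangle$ on a fiber with $\lambda\neq 0$ forces $\langle c,n\rangle=0$, so $c\in\mathrm{span}(a,b)$. Hence $V_A=\mathrm{span}\{I,\mathrm{Re}A,\mathrm{Im}A\}$ and $\dim V_A=3$.

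Since $a,b\in\mathbb{R}^3$ span at most a plane, this exhausts the cases. In each case the converse inclusion comes from Lemma \ref{velements}.

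The main obstacle is the case analysis on fibers. We must justify that in case $r=1$ there is a fiber of positive radius, and in case $r=2$ a fiber with two distinct points. Both follow because $\mathrm{span}(a,b)$ meets the interior of the unit ball projected from $S^2$. We also need to make the Bloch parametrization rigorous, in particular that $x\mapsto s$ is onto $S^2$, with preimages being unit vectors up to scaling as noted in the paper. If that parametrization is unwelcome, the same argument can be run directly with $x=(\cos\theta, e^{i\psi}\sin\theta)$ after unitarily triangularizing $A$.
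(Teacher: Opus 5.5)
Your proof is correct, and it takes a genuinely different route from the paper.

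\textbf{The paper's route.} The paper splits into scalar, normal non-scalar and non-normal cases and relies on imported facts: the elliptical range theorem (Lemma \ref{ellipse}) and the preimage count of Lemma \ref{tsing}. It adds a diagonal computation (Lemma \ref{norm1}) and Corollaries \ref{norm2} and \ref{norm3} to handle the normal case. In the non-normal case it never describes $V_A$ directly. Instead, Lemma \ref{nonnorm} produces a unitary conjugate $U^*AU\notin V_A$, and a dimension count finishes: three independent elements $Re(A),Im(A),I_2$ lie in a proper subspace of the $4$-dimensional $M_2(\mathbb{C})$.

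\textbf{Your route.} Your Bloch-sphere linearization turns $\phi_A$ into the restriction to $S^2$ of an affine map $\mathbb{R}^3\to\mathbb{R}^2$. Then $B\le A$ becomes the elementary condition that $\langle c,\cdot\rangle$ is constant on slices of $S^2$ by affine subspaces of codimension $r$. This buys three things:
\begin{itemize}
\item One argument covers all three cases.
\item The inclusion $V_A\subseteq \mathrm{span}\{I,Re(A),Im(A)\}$ is proved directly rather than by counting.
\item You effectively reprove Lemmas \ref{ellipse} and \ref{tsing} without citation: the image is a linear image of the unit disk, with two-point fibers over interior points and one-point fibers over the boundary.
\end{itemize}
Your case split agrees with the paper's, since $[a\cdot\sigma,b\cdot\sigma]=2i(a\times b)\cdot\sigma$ vanishes exactly when $r\le 1$. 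The paper's route is shorter once the cited lemmas are granted, and it yields Corollaries \ref{norm2} and \ref{norm3} along the way.

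\textbf{Minor points.}
\begin{itemize}
\item Your stated reason for subtracting a scalar from $A$ (that $V_A$ is a vector space containing $I$) really concerns $B$. For $A$, the relevant fact is that $\phi_{A-\lambda I}=\phi_A-\lambda$ has the same fibers as $\phi_A$, which you use implicitly with ``up to a constant''.
\item The unitary-conjugation normalization is never used.
\item For the proposition itself, only the upper bound $V_A\subseteq\mathrm{span}\{I,Re(A),Im(A)\}$ and $I\in V_A$ are needed. The exact value $3$ in case $r=2$ would require complex independence of $I,Re(A),Im(A)$, which does follow from the real independence of $a,b$.
\end{itemize}
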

    The dimensions correspond to three possibilities. If $A=\lambda I_2$, then $V_A$ is dimension 1. If $A$ is normal, but not a scalar multiple of identity, then $V_A$ is dimension 2. And if $A$ is not normal, $V_A$ is dimension 3. \\\\
    We state two essential lemmas and explore these vector spaces before proving Proposition \ref{dimension}.
    \begin{lem}\label{ellipse} If $A\in M_2(\mathbb{C})$ is not normal, then $W(A)$ is a non-degenerate ellipse.\end{lem}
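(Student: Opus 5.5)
The plan is to reduce $A$ to a normal form by unitary similarity and translation, then compute $W(A)$ directly. By Schur's theorem there is a unitary $U$ with $U^*AU$ upper triangular,
\begin{equation*}
U^*AU=\begin{pmatrix}\lambda_1 & c\\ 0 & \lambda_2\end{pmatrix}.
\end{equation*}
By Proposition \ref{basics}(v), $W(A)=W(U^*AU)$. Since $A$ is not normal, $c\neq 0$, because an upper triangular matrix with $c=0$ is diagonal and hence normal. Multiplying the first basis vector by a unimodular scalar is again a unitary similarity, so we may take $c>0$. By Proposition \ref{basics}(i),(ii),(iii), $\phi_{A-\mu I}=\phi_A-\mu$, so translating by $\mu=(\lambda_1+\lambda_2)/2$ shifts $W(A)$ rigidly. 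Likewise $\phi_{e^{i\theta}A}=e^{i\theta}\phi_A$ rotates it. Using these, we reduce to
\begin{equation*}
T=\begin{pmatrix} a & c\\ 0 & -a\end{pmatrix},\qquad a\geq 0,\; c>0.
\end{equation*}
Any affine image of a non-degenerate ellipse under a similarity of $\mathbb{C}$ is again one, so it suffices to treat $T$.

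Next we parametrize unit vectors up to scaling by the phase invariance lemma:
\begin{equation*}
x=(\cos t,\; e^{i s}\sin t),\qquad t\in[0,\pi/2],\; s\in[0,2\pi).
\end{equation*}
A direct computation gives
\begin{equation*}
\phi_T(x)=a\cos 2t+\tfrac{c}{2}\sin 2t\, e^{is}.
\end{equation*}
Set $r=\sin 2t\in[0,1]$ and $u=\cos 2t=\pm\sqrt{1-r^2}$. Writing $\phi_T(x)=X+iY$ gives
\begin{equation*}
X=au+\tfrac{c}{2}r\cos s,\qquad Y=\tfrac{c}{2}r\sin s.
\end{equation*}
For each fixed $t$, the image is a circle of radius $cr/2$ centered at $au$ on the real axis. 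We will show that the union of these circles over $u\in[-1,1]$ is exactly the closed region
\begin{equation*}
\frac{X^2}{a^2+c^2/4}+\frac{Y^2}{c^2/4}\leq 1.
\end{equation*}
This region is a non-degenerate ellipse since $c>0$; it is a disk when $a=0$.

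The main obstacle is this envelope and union computation. We will check it directly: a point $(X,Y)$ lies on the circle for parameter $u$ iff
\begin{equation*}
g(u):=(X-au)^2+Y^2-\tfrac{c^2}{4}(1-u^2)=0 \quad\text{for some } u\in[-1,1].
\end{equation*}
The function $g$ is a quadratic in $u$ with positive leading coefficient $a^2+c^2/4$, and $g(\pm1)=(X\mp a)^2+Y^2\geq 0$. So a root exists in $[-1,1]$ iff the minimum of $g$ is $\leq 0$. The minimizer is $u^*=aX/(a^2+c^2/4)$, and $|u^*|\leq 1$ whenever $(X,Y)$ is in the region or $g$ has a root. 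Computing $g(u^*)\le 0$ should simplify exactly to the displayed ellipse inequality. This routine algebra finishes the proof, with the convexity from the Toeplitz--Hausdorff theorem serving as a consistency check.
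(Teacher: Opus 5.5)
Your proposal is correct. The paper gives no argument of its own for this lemma; it only points to the literature. Your proof is therefore a self-contained substitute, and it is the standard Schur-reduction proof of the elliptical range theorem.

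\textbf{The algebra you left as routine.} It works out. You have $g(u)=(a^2+\tfrac{c^2}{4})u^2-2aXu+X^2+Y^2-\tfrac{c^2}{4}$, and
\[
g(u^*)=\tfrac{c^2}{4}\bigl(\tfrac{X^2}{a^2+c^2/4}+\tfrac{Y^2}{c^2/4}-1\bigr).
\]
So $g(u^*)\le 0$ is exactly your ellipse inequality.

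\textbf{The root test.} It is sound, and it needs slightly less than you state. For $W(T)\subset E$ no bound on $u^*$ is required: a root $u_0\in[-1,1]$ gives $g(u^*)=\min_{\mathbb{R}}g\le g(u_0)=0$. For $E\subset W(T)$, use the bound $|u^*|\le a/\sqrt{a^2+c^2/4}<1$ together with $g(1)\ge 0$. The intermediate value theorem on $[u^*,1]$ then produces the needed root.

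\textbf{What your route adds.} Compared with the citation, it gives explicit data: foci $\lambda_1,\lambda_2$ and minor axis $|c|$. It also yields Lemma \ref{tsing} almost for free, a lemma the paper likewise outsources.
\begin{itemize}
\item An interior point has $g(u^*)<0$, hence exactly two roots in $[-1,1]$.
\item A boundary point has only the double root $u^*\in(-1,1)$.
\item Each root determines exactly one unit vector up to scaling. For $|u|<1$ the circle has positive radius, so $s$ is unique; for $u=\pm1$ the vector is $(1,0)$ or $(0,1)$.
\item Distinct roots give non-proportional vectors, since $|x_1|=\cos t$ differs.
\end{itemize}
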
 
   Prro: See \cite{yuangao}.
    \begin{lem}\label{tsing} Let $A\in M_2(\mathbb{C})$ be non-normal. Suppose $z\in W(A)$. Then \\
    i) If $z\in\partial W(A)$ then there is exactly one (up to scalar multiples) $x\in S_2$ such that $\phi_A(x)=z$\\
    ii) If $z\notin\partial W(A)$ then there are exactly two (up to scalar multiplies) $x\in S_2$ such that $\phi_A(x)=z$.\end{lem}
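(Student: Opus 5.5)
The cleanest route is the Bloch sphere picture. Every unit vector $x\in S_2$, taken up to a unimodular scalar, corresponds to a rank-one projection $xx^*$. Such a projection can be written uniquely as $xx^*=\frac12(I+s_1\sigma_1+s_2\sigma_2+s_3\sigma_3)$, with $(s_1,s_2,s_3)$ on the unit sphere $S^2\subset\mathbb{R}^3$ and $\sigma_j$ the Pauli matrices. This correspondence between $S_2$ modulo scalars and $S^2$ is a bijection. Since $\phi_A(x)=\mathrm{tr}(A\,xx^*)$, we get
\begin{equation*}
\phi_A(x)=\tfrac12\mathrm{tr}(A)+\tfrac12\sum_{j=1}^3 s_j\,\mathrm{tr}(A\sigma_j).
\end{equation*}
Thus, up to scaling, $\phi_A$ is the restriction to $S^2$ of an affine map $L:\mathbb{R}^3\to\mathbb{C}\cong\mathbb{R}^2$. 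Counting preimages of $\phi_A$ up to scaling is therefore the same as counting points of $S^2\cap L^{-1}(z)$.

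The first thing to establish is that the linear part $T:\mathbb{R}^3\to\mathbb{R}^2$ of $L$ has rank $2$ when $A$ is not normal. Write $A=Re(A)+iIm(A)$ and expand $Re(A)$ and $Im(A)$ in the Pauli basis; their traceless parts are real vectors $a,b\in\mathbb{R}^3$, and the two rows of $T$ are $a$ and $b$ (up to a factor). A direct computation shows that $Re(A)$ and $Im(A)$ commute exactly when $a$ and $b$ are parallel. Since $A$ is normal if and only if $Re(A)$ and $Im(A)$ commute, non-normality of $A$ means $a,b$ are linearly independent. Hence $T$ has rank $2$ and a one-dimensional kernel spanned by $n=a\times b$.

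The preimage computation is then elementary. For $z\in W(A)$, the fibre $L^{-1}(z)$ is a line $\ell_z=p+\mathbb{R}n$ in $\mathbb{R}^3$, and $W(A)=L(S^2)$ is exactly the set of $z$ whose line meets the closed unit ball. This image is $T$ applied to the unit ball, a filled ellipse, which agrees with Lemma \ref{ellipse}. A line meets $S^2$ in exactly one point when it is tangent and in exactly two points when it passes through the open ball.

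It remains to identify these two cases with boundary and interior points, and this is the step needing the most care. Project orthogonally onto $n^\perp$. The line $\ell_z$ corresponds to a single point $q$ of the plane $n^\perp$, and $T$ restricted to $n^\perp$ is a linear isomorphism onto $\mathbb{R}^2$. The line is tangent to the sphere exactly when $|q|=1$ and passes through the open ball exactly when $|q|<1$. Because $T|_{n^\perp}$ is a homeomorphism, it carries the unit circle of $n^\perp$ onto $\partial W(A)$ and the open unit disc onto the interior of $W(A)$. This gives (i) and (ii).

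Finally, distinct points of $S^2$ correspond to linearly independent unit vectors, so "up to scaling" is exactly what the count measures.
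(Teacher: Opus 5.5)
Your proof is correct, and it takes a different route from the paper only in the sense that the paper gives no argument at all: it simply writes ``See \cite{tsing}.'' What you have is a self-contained replacement for that citation.

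The Bloch-sphere reduction turns the preimage count into the elementary question of how a line in $\mathbb{R}^3$ meets the unit sphere. The one substantive input is that non-normality of $A$ is exactly linear independence of the traceless parts $a,b$ of $Re(A)$ and $Im(A)$. You supply this cleanly via $[a\cdot\sigma,b\cdot\sigma]=2i(a\times b)\cdot\sigma$ together with $A^*A-AA^*=2i[Re(A),Im(A)]$.

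Your approach also buys more than the citation does. It proves Lemma \ref{ellipse} along the way: $W(A)$ is the image of the unit disc of $n^\perp$ under the affine map $q\mapsto\frac12\mathrm{tr}(A)+Tq$. The same picture also handles Lemma \ref{norm1} uniformly. When $A$ is normal but not scalar, $a$ and $b$ are parallel and not both zero, so $T$ has rank one and the fibres are planes. These planes meet $S^2$ in a single point over the two endpoints of the segment, and in a circle (uncountably many classes up to scaling) over interior points. Thus the whole $2\times 2$ analysis of Section \ref{2dcase} could rest on this one computation rather than on external references plus a separate diagonal calculation. The paper's citation buys only brevity.

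Two small points are worth stating explicitly in a write-up, though neither is a gap. First, $L(S^2)=L(\overline{B})$, because any line meeting the closed unit ball must meet the sphere. Second, the homeomorphism carrying the unit circle and open disc of $n^\perp$ onto $\partial W(A)$ and the interior of $W(A)$ is the affine map $q\mapsto\frac12\mathrm{tr}(A)+Tq$, not $T|_{n^\perp}$ alone.
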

    Proof: See \cite{tsing}.
    \begin{lem}\label{constant}If $A=\lambda I_2$ and $B\leq A$, then $B=\tau I_2$.\end{lem}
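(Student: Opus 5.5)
If $A=\lambda I_2$, then Proposition \ref{basics}(iii) gives $\phi_A(x)=\lambda$ for every $x\in S_2$. So $W(A)=\{\lambda\}$ is a single point, and the connecting map $f:W(A)\to W(B)$ is defined on one point only. From $\phi_B=f\circ\phi_A$ we get $\phi_B(x)=f(\lambda)$ for every unit vector $x$. In other words, $\phi_B$ is constant on $S_2$; call the constant $\tau:=f(\lambda)$.

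Applying Proposition \ref{basics}(iii) again, now to $B$, the identity $\phi_B(x)=\tau$ for all $x\in S_2$ forces $B=\tau I_2$.

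There is no real obstacle. The only point needing care is the direction of (iii) we invoke: "$\phi_B$ constant implies $B$ scalar". If one preferred not to rely on it, it follows from polarization. A constant quadratic form on the sphere means $\langle (B-\tau I)x,x\rangle=0$ for all $x$, and over $\mathbb{C}$ the polarization identity then gives $B-\tau I=0$. $\square$
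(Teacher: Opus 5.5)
Your proof is correct and follows the paper's argument. Proposition~\ref{basics}(iii) makes $\phi_A$ constant, so $\phi_B=f\circ\phi_A$ is constant, and applying (iii) again gives $B=\tau I_2$; your polarization remark is a valid backup justification for the direction of (iii) you use.
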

    Proof: From Proposition \ref{basics}, $\phi_A$ is the constant map. Hence, if $B\leq A$, then $\phi_B$ must be a constant map, and $B=\tau I_2$. $\square$ \\\\
       We explore the numerical range map and associated vector space for normal matrices that are not scalar multiples of the identity. It is well known that the numerical range of a normal matrix is the convex hull of its eigenvalues \cite{yuangao}. For a normal 2x2 matrix $A$, we have $W(A)=[\lambda_1,\lambda_2]$, where $\lambda_i$ are the eigenvalues of $A$, and $[\lambda_1,\lambda_2]$ denotes the closed line segment between $\lambda_1$ and $\lambda_2$.   \\\\
    The following lemma mirrors Lemma \ref{tsing}, but was not proven in \cite{tsing}. 
    \begin{lem}\label{norm1} For a normal, not scalar of identity, $A\in M_2(\mathbb{C})$ with eigenvalues $\lambda_1,\lambda_2$\\
    i) There is exactly one $x\in S_2$, up to scaling, such that $\phi_A(x)=\lambda_i$, and this $x$ is necessarily an eigenvector of $A$.\\
    ii) For $z\in (\lambda_1,\lambda_2)$, there are uncountably many $x\in S_2$, up to scaling, such that $\phi_A(x)=z$. \end{lem}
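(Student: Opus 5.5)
Since $A$ is normal, we diagonalize it by the spectral theorem: there is a unitary $U$ with $A=U^*DU$, where $D=\mathrm{diag}(\lambda_1,\lambda_2)$ and $\lambda_1\neq\lambda_2$ (because $A$ is not a scalar multiple of $I_2$). Since $\phi_A(x)=\phi_D(Ux)$ and $U$ maps $S_2$ bijectively onto itself, preserving linear (in)dependence, it suffices to prove both statements for $D$. For $x=(a,b)$ with $|a|^2+|b|^2=1$ we have
\begin{equation*}
\phi_D(x)=\lambda_1|a|^2+\lambda_2|b|^2=\lambda_1 t+\lambda_2(1-t),\qquad t=|a|^2\in[0,1].
\end{equation*}
The map $t\mapsto \lambda_1 t+\lambda_2(1-t)$ is an affine bijection from $[0,1]$ onto the segment $[\lambda_1,\lambda_2]$, because $\lambda_1\neq\lambda_2$. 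Hence every $z\in W(A)$ determines a unique value $t_z$ of $|a|^2$, and the preimage $\phi_D^{-1}(z)$ is exactly the set of unit vectors with $|a|^2=t_z$.

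For part i), $z=\lambda_1$ forces $t_z=1$, so $b=0$ and $x=(a,0)$ with $|a|=1$. This is a single vector up to scaling, namely $e_1$, which is an eigenvector of $D$. The case $z=\lambda_2$ is symmetric and gives $e_2$. Pulling back by $U$, the preimage of $\lambda_i$ under $\phi_A$ is, up to scaling, the single vector $U^*e_i$, and this is an eigenvector of $A$ with eigenvalue $\lambda_i$.

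For part ii), $z\in(\lambda_1,\lambda_2)$ gives $t_z\in(0,1)$. The preimage is then
\begin{equation*}
\{(\sqrt{t_z}\,e^{i\alpha},\sqrt{1-t_z}\,e^{i\beta}) : \alpha,\beta\in\mathbb{R}\},
\end{equation*}
a torus. Two such vectors are scalar multiples of each other exactly when their phase differences $\beta-\alpha$ agree modulo $2\pi$. So the classes up to scaling are parametrized by $\theta=\beta-\alpha\in[0,2\pi)$: the vectors $(\sqrt{t_z},\sqrt{1-t_z}\,e^{i\theta})$ are pairwise non-proportional, because both coordinates are nonzero. This gives uncountably many classes, and applying $U^*$ transfers the conclusion to $A$.

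The only step needing care is the meaning of ``up to scaling''. The paper phrases it as counting linearly independent vectors, but in $\mathbb{C}^2$ at most two vectors can be linearly independent. I will therefore interpret it as counting distinct one-dimensional subspaces, that is, pairwise non-proportional vectors. With that interpretation, the check that distinct $\theta$ give non-proportional vectors is immediate.
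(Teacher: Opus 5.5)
Your proof is correct and follows the same route as the paper: reduce to $D=\mathrm{diag}(\lambda_1,\lambda_2)$ by unitary equivalence, compute $\phi_D(x)=\lambda_1|a|^2+\lambda_2|b|^2$, read off the eigenvectors at the endpoints, and exhibit the phase family $(\sqrt{t},\sqrt{1-t}\,e^{i\theta})$ over interior points. You are slightly more careful than the paper: you check that distinct phases give pairwise non-proportional vectors, state how ``up to scaling'' is interpreted, and carry the conclusion back through $U$ explicitly, where the paper only asserts these steps.
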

        Proof: We first consider the diagonal matrix. Let 
    \begin{equation*}
        A=\begin{bmatrix}
            \lambda_1 & 0 \\
            0 & \lambda_2
        \end{bmatrix}.
    \end{equation*}
    We see that 
    \begin{align*}
        \left\langle A \begin{bmatrix}
            x_1 \\ x_2
        \end{bmatrix}, \begin{bmatrix}
            x_1 \\ x_2
        \end{bmatrix}\right\rangle &=\left\langle \begin{bmatrix}
            \lambda_1 x_1 \\ \lambda_2 x_2\end{bmatrix},\begin{bmatrix}
                x_1 \\ x_2
            \end{bmatrix}\right\rangle \\
               &= \lambda_1|x_1|^2+\lambda_2|x_2|^2.
    \end{align*}
       From the above we have $\phi_A(x)=\lambda_i$ if and only if $|x_i|^2=1$, hence if and only if $x$ is an eigenvector of $A$. \\\\
    Now consider $z\in (\lambda_1,\lambda_2)$. There exists a $t\in(0,1)$ such that $z=t\lambda_1+(1-t)\lambda_2$. We see that for all $\theta\in[0,2\pi)$
    \begin{equation*}
        \phi_A\left(\begin{bmatrix}
            e^{i\theta}\sqrt{t}\\ \sqrt{1-t}
        \end{bmatrix}\right)=z.
    \end{equation*}
    Up to scaling, $\phi_A$ maps one vector to each end point and an uncountable number to interior points. \\\\
    As all normal matrices are unitarily equivalent to a diagonal matrix, the proof for a general normal matrix follows from the above. $\square$
    \begin{cor}\label{norm2}If $A$ is normal and $B\leq A$, then $B$ is normal.\end{cor}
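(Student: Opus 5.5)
The plan is a counting argument on preimages. We compare Lemma \ref{norm1}, which says a normal non-scalar $A$ has uncountably many preimages over interior points, with Lemma \ref{tsing}, which says a non-normal matrix has at most two preimages over every point. Throughout, $A,B\in M_2(\mathbb{C})$, as in the rest of this section.

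First we dispose of the trivial case. If $A=\lambda I_2$, then Lemma \ref{constant} gives $B=\tau I_2$, which is normal. So assume $A$ is normal but not a scalar multiple of the identity, with eigenvalues $\lambda_1\neq\lambda_2$, so that $W(A)=[\lambda_1,\lambda_2]$. Suppose, for contradiction, that $B\leq A$ with connecting map $f$ and that $B$ is not normal.

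Pick any $z$ in the open segment $(\lambda_1,\lambda_2)$. By Lemma \ref{norm1} ii) and its proof, after a unitary change of basis (Proposition \ref{basics} v)) the vectors
\begin{equation*}
x_\theta=\begin{bmatrix} e^{i\theta}\sqrt{t}\\ \sqrt{1-t}\end{bmatrix},\qquad \theta\in[0,2\pi),
\end{equation*}
all satisfy $\phi_A(x_\theta)=z$. They are pairwise not scalar multiples of one another, since $t\in(0,1)$ makes both coordinates nonzero, so equal second coordinates force any scalar to be $1$. Because $f$ is well defined, $\phi_B(x_\theta)=f(z)$ for every $\theta$. Hence $\phi_B^{-1}(f(z))$ contains uncountably many vectors up to scaling. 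But $B$ is non-normal, so Lemma \ref{tsing} says every point of $W(B)$, whether on the boundary or in the interior, has at most two preimages up to scaling. This is a contradiction, so $B$ is normal.

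The only delicate step is checking that the family $x_\theta$ really consists of pairwise non-proportional vectors; this is what makes the phrase ``up to scaling'' compatible with Lemma \ref{tsing}. It holds by the coordinate observation above, and the rest is a direct application of the cited lemmas.
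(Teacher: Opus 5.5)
Your proof is correct and uses the same preimage-counting argument as the paper, comparing the uncountable fibres from Lemma \ref{norm1} ii) with the at-most-two fibres from Lemma \ref{tsing}. You also spell out two points the paper leaves implicit: the scalar case via Lemma \ref{constant}, and the check that the vectors $x_\theta$ are pairwise non-proportional.
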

    Proof: From Lemma \ref{tsing}, we know that the preimage of any value in the numerical range of a non-normal 2x2 matrix has either one or two elements up to scaling. From Lemma \ref{norm1}, normal matrices pair uncountably many elements up to scaling. $\square$
\begin{cor}\label{norm3} If $A$ is normal and $B\leq A$, then $B$ has the same eigenvectors as $A$. \end{cor}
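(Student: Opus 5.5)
We work in the $M_2(\mathbb{C})$ setting of this section. We treat separately the case where $A$ is a scalar multiple of $I_2$ and the case where $A$ is normal but not scalar. In the scalar case, Lemma \ref{constant} gives $B=\tau I_2$, and every vector is an eigenvector of $B$, so the statement holds trivially. We may therefore assume $A$ is normal and not scalar. By Proposition \ref{basics}(v), unitary conjugation does not change the relation: replacing $A,B$ by $U^*AU,U^*BU$ preserves $B\leq A$, since $\phi_{U^*AU}(x)=\phi_A(Ux)$. So we may also assume $A=\mathrm{diag}(\lambda_1,\lambda_2)$ with $\lambda_1\neq\lambda_2$ and eigenvectors $e_1,e_2$.

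By Corollary \ref{norm2}, $B$ is normal. Hence it suffices to show that $e_1$ and $e_2$ are eigenvectors of $B$; equivalently, $B$ is diagonal in this basis. Write $B=\begin{bmatrix} a & b\\ c& d\end{bmatrix}$. The key input is the fibre from the proof of Lemma \ref{norm1}. For $t\in(0,1)$, every vector $x_\theta=(e^{i\theta}\sqrt t,\sqrt{1-t})^T$ satisfies $\phi_A(x_\theta)=t\lambda_1+(1-t)\lambda_2$. Since $B\leq A$, the connecting map $f$ is well defined, so $\phi_B(x_\theta)$ must be independent of $\theta$. We compute
\begin{equation*}
\phi_B(x_\theta)=a t+d(1-t)+\sqrt{t(1-t)}\left(b e^{-i\theta}+c e^{i\theta}\right).
\end{equation*}
If this is constant in $\theta\in[0,2\pi)$, then the trigonometric polynomial $be^{-i\theta}+ce^{i\theta}$ is constant. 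Since $e^{i\theta}$ and $e^{-i\theta}$ are linearly independent functions orthogonal to the constants, this forces $b=c=0$. One could alternatively take $\theta=0,\pi$ to get $b+c=0$ and $\theta=\pm\pi/2$ to get $b-c=0$. Thus $B$ is diagonal, and $Be_i=\tau_i e_i$.

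The only step requiring care is the reduction to the diagonal case: we must check that the relation $\leq$ is invariant under simultaneous unitary conjugation. This is immediate, since $\phi_{U^*BU}=\phi_B\circ U$ and $\phi_{U^*AU}=\phi_A\circ U$, so the same connecting map $f$ works. After that, the computation above is routine and does not even need Corollary \ref{norm2}, since diagonality of $B$ follows directly. In the conclusion we should note that "same eigenvectors" should be read as: each eigenvector of $A$ is an eigenvector of $B$. The converse fails when $B$ is scalar, which is permitted.
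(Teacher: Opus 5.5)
Your proof is correct, but it takes a different route from the paper. The paper does no computation. Assuming $B$ is not scalar, it invokes Corollary \ref{norm2} to get $B$ normal, then uses Lemma \ref{norm1} twice. An eigenvector $x$ of $B$ has a singleton $\phi_B$-fibre (up to scaling). Since $B\leq A$ forces every $\phi_A$-fibre to lie inside a $\phi_B$-fibre, $x$ also has a singleton $\phi_A$-fibre, so $x$ is an eigenvector of $A$. That shows the eigenvectors of $B$ are eigenvectors of $A$. Your argument runs in the other direction: testing $\phi_B$ on the circle $x_\theta$ inside one interior fibre of $\phi_A$, you show $B$ is diagonal in the eigenbasis of $A$.

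Your route is more elementary and self-contained. It does not need Corollary \ref{norm2}, and so does not rest on the externally cited Lemma \ref{tsing}. It needs no case split on whether $B$ is scalar. And diagonality of $B$ gives at once both the normality of $B$ and $B\in\mathrm{span}\{A,I_2\}$, which is Case 2 of Proposition \ref{dimension}. It also recovers the paper's direction when $B$ is not scalar, since a diagonal $B$ with distinct entries has exactly the multiples of $e_1,e_2$ as eigenvectors. That is worth stating explicitly, since you only noted that the converse fails for scalar $B$.

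The paper's fibre-counting argument, by contrast, is coordinate-free. It uses only the characterization ``singleton fibre $\iff$ eigenvector'' together with the fact that $\phi_A$-fibres refine $\phi_B$-fibres. This is the same pairing mechanism the paper exploits in Lemma \ref{nonnorm}.
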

Proof: If $B=\lambda I_2$, then the claim is true. \\\\
If $B$ is not a scalar multiple of the identity, from Corollary \ref{norm2}, $B$ is normal. From Lemma \ref{norm1} $B$'s eigenvectors are not paired with any other vectors. Hence, from Lemma \ref{norm1} they must be eigenvectors of $A$. $\square$.
 \begin{lem}\label{nonnorm}If $A\in M_2(\mathbb{C})$ is not normal, there exists a $B$ such that $B\not\leq A$. \end{lem}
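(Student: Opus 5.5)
Since $A$ is not normal, Lemma \ref{ellipse} says $W(A)$ is a non-degenerate ellipse, and Lemma \ref{tsing}(ii) gives two linearly independent unit vectors $x,y\in S_2$ with $\phi_A(x)=\phi_A(y)=z$ for any $z$ in the interior of $W(A)$. By the well-definedness criterion following Definition \ref{relation}, it suffices to find a single $B\in M_2(\mathbb{C})$ with $\phi_B(x)\neq\phi_B(y)$; then $B\not\leq A$.

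Fix an interior point $z$ and such a pair $x,y$; they are not scalar multiples of one another. Take $B=P_x=xx^*$, the orthogonal projection onto $\mathrm{span}\{x\}$. Then
\begin{equation*}
\phi_B(x)=|\langle x,x\rangle|^2=1,\qquad \phi_B(y)=|\langle y,x\rangle|^2 .
\end{equation*}
By Cauchy--Schwarz, $|\langle y,x\rangle|^2=1$ holds only when $y$ is a unimodular multiple of $x$, which is excluded. Hence $\phi_B(y)<1=\phi_B(x)$, and no connecting map $f$ with $\phi_B=f\circ\phi_A$ can exist.

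The only real input is the existence of two non-proportional unit vectors in a single fiber of $\phi_A$, which Lemma \ref{tsing} supplies. So there is no genuine obstacle beyond checking that the interior of $W(A)$ is nonempty. This holds because Lemma \ref{ellipse} says the ellipse is non-degenerate, so it has nonempty interior. As a side remark, $B$ is Hermitian, so this also shows that $V_A$ is a proper subspace of $M_2(\mathbb{C})$. Together with Lemma \ref{velements}, this is consistent with $\dim V_A=3$ in Proposition \ref{dimension}.
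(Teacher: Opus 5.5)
Your proof is correct, but it uses a different witness $B$ than the paper does. Both arguments start from the same fact: the fiber of $\phi_A$ over an interior point contains two non-proportional unit vectors $x,y$.

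The paper then picks a boundary point $v=\phi_A(z)$, takes a unitary $U$ with $Ux=z$, and uses $B=U^*AU$. Since $W(U^*AU)=W(A)$ and $\phi_{U^*AU}(x)=v$ lies on the boundary, part (i) of Lemma \ref{tsing}, applied to the non-normal matrix $U^*AU$, forces $\phi_{U^*AU}(y)\neq\phi_{U^*AU}(x)$.

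You instead take the rank-one projection $xx^*$ and separate $x$ from $y$ using the equality case of Cauchy--Schwarz. Your route is more elementary. It uses only part (ii) of Lemma \ref{tsing} and no unitary conjugation. It also works verbatim for any $A\in B(H)$ on any Hilbert space whenever some fiber of $\phi_A$ contains two non-proportional unit vectors.

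The paper's route gives a different kind of information. Its witness is unitarily equivalent to $A$ and has the same numerical range, so it shows that even such an operator can fail to satisfy $B\leq A$.
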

 Proof: From Lemma \ref{ellipse} the numerical range of $A$ is a nondegenerate ellipse. Take a point $u$ from the interior of $W(A)$ and $v$ from the boundary. From Lemma \ref{tsing}, there exist two linearly independent unit vectors $x,y$ such that $\phi_A(x)=\phi_A(y)=u$ and a unit vector $z$ such that $\phi_A(z)=v$. Consider a unitary matrix $U$ such that $Ux=z$. \\\\
 From Proposition \ref{basics}, we know $W(U^*AU)=W(A)$. From our construction, we see that 
 \begin{align*}
     \phi_{U^*AU}(x)&:=\langle U^*AUx,x\rangle\\
     &=\langle AUx,Ux\rangle\\
     &=\langle Az,z\rangle\\
     &=v,
 \end{align*}
 a boundary point of $W(U^*AU)$. From Lemma \ref{tsing}, we know that $\phi_{U^*AU}$ does not pair $x$ with another vector, however, $\phi_A$ pairs $x$ with $y$. Hence, $U^*AU\not\leq A$. $\square$
 \begin{lem}\label{independent} If $A\in M_2(\mathbb{C})$ is not normal, then $Re(A),Im(A),$ and $I$ are linearly independent.\end{lem}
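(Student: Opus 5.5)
We argue by contraposition: assume $\alpha Re(A)+\beta Im(A)+\gamma I=0$ with $(\alpha,\beta,\gamma)\neq(0,0,0)$ and show that $A$ must then be normal. The key observation is that $Re(A)$ and $Im(A)$ are self-adjoint, so the dependence relation can be reduced to one with real coefficients. Taking adjoints of the relation gives $\bar\alpha Re(A)+\bar\beta Im(A)+\bar\gamma I=0$. Adding this to the original relation, and separately subtracting it and dividing by $i$, yields two relations with real coefficients $(\mathrm{Re}\,\alpha,\mathrm{Re}\,\beta,\mathrm{Re}\,\gamma)$ and $(\mathrm{Im}\,\alpha,\mathrm{Im}\,\beta,\mathrm{Im}\,\gamma)$. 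At least one of these triples is nonzero, so we may assume $\alpha,\beta,\gamma\in\mathbb{R}$.

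Now split into cases. If $\alpha=\beta=0$, then $\gamma I=0$ forces $\gamma=0$, which is a contradiction. Otherwise, by symmetry, suppose $\beta\neq 0$; the case $\alpha\neq0$ is identical with the roles of $Re(A)$ and $Im(A)$ swapped. Then
\begin{equation*}
Im(A)=-\tfrac{\alpha}{\beta}Re(A)-\tfrac{\gamma}{\beta}I
\end{equation*}
is a real polynomial in $Re(A)$. Consequently $Re(A)$ and $Im(A)$ commute. Since $A=Re(A)+iIm(A)$ and $A^*=Re(A)-iIm(A)$, the commutation gives $AA^*=A^*A$, so $A$ is normal. This contradicts the hypothesis.

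As an alternative, more geometric route using the paper's tools, one could apply $\phi$ to the real relation: by Corollary~\ref{decomp}, every point $u+iv$ of $W(A)$ satisfies $\alpha u+\beta v+\gamma=0$. Thus $W(A)$ lies on a line (or is empty if $\alpha=\beta=0$, which is impossible), contradicting Lemma~\ref{ellipse}, which says $W(A)$ is a non-degenerate ellipse. Either argument works.

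The only real obstacle is the reduction from complex to real coefficients. Without it, a complex combination such as $Re(A)+iIm(A)=A$ can be nonzero and does not immediately constrain $W(A)$ to a line. Taking adjoints resolves this, and the rest is routine.
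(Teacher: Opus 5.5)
Your proposal is correct, and its main argument is the paper's: a nontrivial dependence writes one of $Re(A)$, $Im(A)$ as a combination of the other and $I$, so they commute and $A$ is normal. Your single split on whether $(\alpha,\beta)=(0,0)$ is cleaner than the paper's separate treatment of $I$ via simultaneous diagonalizability, but the reduction to real coefficients is needed only for your geometric alternative via Lemma~\ref{ellipse}, since $-\tfrac{\alpha}{\beta}Re(A)-\tfrac{\gamma}{\beta}I$ commutes with $Re(A)$ for any complex $\alpha,\beta,\gamma$.
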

 Proof: If $Re(A)$ were a linear combination of $Im(A)$ and $I_2$, then $Re(A)$ would commute with $Im(A)$, and $A$ would be normal. The same proof works for $Im(A)$. \\\\
 If $I_2$ were a linear combination of $Re(A)$ and $Im(A)$, then $Re(A)$ and $Im(A)$ must be simultaneously diagonalizable and would commute. $\square$\\\\
     \textbf{Proof of Proposition \ref{dimension}}\\\\
          A matrix $A\in M_2(\mathbb{C})$ falls into one of these three cases. \\\\
     Case 1. If $A=\lambda I_2$, by Lemma \ref{constant}, we know that $B\leq A$ implies $B=\tau I_2$. Hence, $V_A$ is generated by $I_2$. \\\\
     Case 2. If $A$ is normal, but not in case 1, then the linearly independent elements $A$ and $I_2$ are in $V_A$. By Corollary \ref{norm2} and Corollary \ref{norm3}, we know that every element of $V_A$ is a normal matrix with the same eigenvectors as $A$. We see that, if $A$ is diagonal, then $B\in V_A$ is also diagonal. This is a two dimensional complex vector space, hence must be spanned by $A$ and $I_2$.  \\\\
         Case 3. If $A$ is not normal, $V_A$ contains $Re(A)$, $Im(A)$, and $I_2$. From Lemma \ref{independent} these are linearly independent. So, $V_A$ is of degree at least three. By Lemma \ref{nonnorm}, we know that there is a matrix not in $V_A$, so it is not of degree four. Hence, $V_A$ is generated by $Re(A)$, $Im(A)$, and $I_2$. $\square$
    \begin{cor}\label{affine1} The connecting maps for 2x2 matrices are affine.\end{cor}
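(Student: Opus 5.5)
Let $A,B\in M_2(\mathbb{C})$ with $B\leq A$ and connecting map $f:W(A)\to W(B)$, so $\phi_B=f\circ\phi_A$. The plan is to read off $f$ from the description of $V_A$ in the proof of Proposition \ref{dimension}. In each of the three cases $B$ is an explicit linear combination of generators of $V_A$, and each generator's connecting map from $A$ is affine.

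\textbf{Case 1.} If $A=\lambda I_2$, then $W(A)=\{\lambda\}$ is a single point. Any function on a one-point set is trivially affine, being constant. (Lemma \ref{constant} also gives $B=\tau I_2$, so $f\equiv\tau$.)

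\textbf{Case 2.} If $A$ is normal but not scalar, Case 2 of Proposition \ref{dimension} gives $B=aA+bI_2$ for some $a,b\in\mathbb{C}$. Then $\phi_B=a\phi_A+b$ by Proposition \ref{basics}. Since $\phi_A$ is surjective onto $W(A)$, $f$ is determined on all of $W(A)$, so $f(z)=az+b$, which is affine (even complex affine).

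\textbf{Case 3.} If $A$ is not normal, Case 3 gives $B=\alpha Re(A)+\beta Im(A)+\gamma I_2$. By Corollary \ref{decomp}, writing $z=\phi_A(x)=u+iv$, we have $\phi_{Re(A)}(x)=u=\mathrm{Re}\,z$ and $\phi_{Im(A)}(x)=v=\mathrm{Im}\,z$. Hence
\begin{equation*}
\phi_B(x)=\alpha\,\mathrm{Re}\,\phi_A(x)+\beta\,\mathrm{Im}\,\phi_A(x)+\gamma .
\end{equation*}
Surjectivity of $\phi_A$ onto $W(A)$ then forces $f(u+iv)=\alpha u+\beta v+\gamma$. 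This is a real-affine map $\mathbb{R}^2\to\mathbb{C}\cong\mathbb{R}^2$ restricted to $W(A)$, which is the sense of ``affine'' intended.

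The only subtle point is uniqueness of $f$. This is exactly the observation that $f$ is determined on $W(A)=\phi_A(S_2)$ because $\phi_A$ maps onto it, so no step is a real obstacle. The substantive work is already contained in Proposition \ref{dimension}, specifically in the spanning claims for $V_A$ in Cases 2 and 3.
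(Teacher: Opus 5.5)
Your proof is correct and follows the paper's route: both read $B$ off from the spanning description of $V_A$ in Proposition \ref{dimension} and observe that the resulting connecting map is real-affine. The paper does this in one step by writing every $B\leq A$ as $\alpha Re(A)+\beta Im(A)+\gamma I_2$, which also covers your Cases 1 and 2 since there $V_A$ lies in the span of $Re(A)$, $Im(A)$, $I_2$. Your remark that $f$ is uniquely determined on $W(A)=\phi_A(S_2)$ is a small point the paper leaves implicit.
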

    Proof. By Proposition \ref{dimension} if $B\leq A$, then 
    \begin{equation*}
        B=(a_1+ia_2)Re(A)+(b_1+ib_2) Im(A)+(c_1 +ic_2)I_2
    \end{equation*}
for real numbers $a_1,a_2,b_1,b_2,c_1,c_2$. Considering Corollary \ref{decomp}, the connecting map between $A$ and $B$ is of the form
\begin{equation*}
    f\left(\begin{bmatrix}
        x_1\\x_2
    \end{bmatrix}\right)=\begin{bmatrix}
        a_1 & b_1 \\ a_2 & b_2
    \end{bmatrix}\begin{bmatrix}
        x_1\\x_2
    \end{bmatrix}+\begin{bmatrix}
        c_1\\c_2
    \end{bmatrix}.
\end{equation*}
See that for $x\in S_2$
\begin{align*}
    f\circ\phi_A(x)&=f\left(\begin{bmatrix}
        \phi_{Re(A)}(x)\\\phi_{Im(A)}(x)
    \end{bmatrix}\right)\\
    &=(a_1+ia_2)\phi_{Re(A)}(x)+(b_1+ib_2) \phi_{Im(A)}(x)+(c_1 +ic_2)\\
    &=\phi_B(x)
\end{align*}
$\square$
    \section{Connecting Maps are Affine}\label{conn}
    In this section, we prove one of our main results: if $\phi_B=f\circ \phi_A$, then $f$ is affine. We will see that by compressing the problem to two dimensions, this result follows easily from our work in section \ref{2dcase}. 
    \begin{lem}\label{comp2}For a closed subspace $M\subset H$, if $B\leq A$, then $B_M\leq A_M$. \end{lem}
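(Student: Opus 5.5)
The plan is to restrict the connecting map $f$ from $A$ to $B$ to the image of the unit sphere of $M$, and to check that this restriction is a connecting map from $A_M$ to $B_M$ in the sense of Definition \ref{relation}. The key tool is the computation in the proof of Lemma \ref{complemma}: for $x\in M$ we have $\phi_{A_M}(x)=\phi_{P_MA}(x)=\phi_A(x)$. The same computation with $B$ in place of $A$ gives $\phi_{B_M}(x)=\phi_B(x)$ for all $x\in S_M$. Here $S_M=S_H\cap M$ denotes the unit sphere of $M$.

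First I check well-definedness, which the paper identifies as the essential requirement after Definition \ref{relation}. Take $x,y\in S_M$ with $\phi_{A_M}(x)=\phi_{A_M}(y)$. Then $\phi_A(x)=\phi_A(y)$, because $x,y\in S_H$. Since $B\leq A$, this implies $\phi_B(x)=\phi_B(y)$, and therefore $\phi_{B_M}(x)=\phi_{B_M}(y)$.

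Next I define the connecting map explicitly. By Lemma \ref{complemma}, $W(A_M)\subset W(A)$, so I can set $g:=f|_{W(A_M)}$. For $x\in S_M$,
\begin{equation*}
g(\phi_{A_M}(x))=f(\phi_A(x))=\phi_B(x)=\phi_{B_M}(x).
\end{equation*}
Hence $\phi_{B_M}=g\circ\phi_{A_M}$, with $g$ mapping $W(A_M)$ onto $W(B_M)$. This gives $B_M\leq A_M$.

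There is no real obstacle here. The only points needing care are bookkeeping: $g$ must be defined on $W(A_M)$ and land in $W(B_M)$, and both facts follow from Lemma \ref{complemma} and the identity above. One should also note that the argument applies equally to the case where $M$ is two-dimensional, which is the case used in Section \ref{conn} together with Corollary \ref{affine1}.
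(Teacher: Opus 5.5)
Your proof is correct and takes the same approach as the paper, which simply restricts the connecting map $f$ to $W(A_M)$. You have filled in the details the paper leaves implicit: that $\phi_{A_M}=\phi_A$ and $\phi_{B_M}=\phi_B$ on $S_M$, and that the restriction maps onto $W(B_M)$; your separate well-definedness check is redundant once the restriction is explicit, but harmless.
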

    Proof: The connecting map between $A$ and $B$, restricted to $W(A_M)$, connects $A_M$ and $B_M$. $\square$
    \begin{thm}\label{affine2}If $A,B\in B(H)$ and $\phi_B=f\circ \phi_A$, then $f$ is affine. \end{thm}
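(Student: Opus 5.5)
The plan is to reduce to the $2\times 2$ case via compressions, using Lemma \ref{comp2} and Corollary \ref{affine1}, and then to glue the local affine descriptions into one global affine map on $W(A)$. Identify $\mathbb{C}$ with $\mathbb{R}^2$ throughout, so that "affine" means $f(w)=L(w)+c$ with $L$ real-linear and $c\in\mathbb{C}$.

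First I would dispose of the degenerate cases. If $A=\lambda I$, then $W(A)$ is a point and any $f$ on it is trivially affine. If $W(A)$ is contained in a line, then after an affine change of coordinates in $\mathbb{C}$ we may assume $A$ is self-adjoint, by Proposition \ref{basics}. In that case it suffices to show $f$ is affine along the segment $W(A)$.

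In the general case, fix any two-dimensional subspace $M\subset H$. By Lemma \ref{comp2} we have $B_M\le A_M$, with connecting map $f|_{W(A_M)}$, and by Corollary \ref{affine1} this restriction is affine. Moreover, by Proposition \ref{dimension}:
\begin{itemize}
\item if $A_M$ is non-normal, $W(A_M)$ is a nondegenerate ellipse (Lemma \ref{ellipse}), which has nonempty interior, so the affine map $f|_{W(A_M)}$ is uniquely determined by $f$ there;
\item if $A_M$ is normal, $f$ is affine on the segment $W(A_M)=[\lambda_1,\lambda_2]$.
\end{itemize}
So $f$ is affine on every such piece.

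Next I would glue the pieces. Any two points $\phi_A(x),\phi_A(y)\in W(A)$ lie in $W(A_M)$ for $M=\mathrm{span}\{x,y\}$. Hence $f$ is affine on every segment joining two points of $W(A)$, that is, $f$ restricted to every line intersected with the convex set $W(A)$ is affine. The remaining claim is that a function on a convex set $K\subset\mathbb{R}^2$ which is affine on every chord is globally affine. If $K$ has empty interior this is the segment case, handled directly. Otherwise pick three affinely independent points $p_0,p_1,p_2\in K$ and let $g$ be the unique affine map agreeing with $f$ at them. Then:
\begin{enumerate}
\item $f=g$ on the triangle $T=\mathrm{conv}(p_0,p_1,p_2)$. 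Indeed, $f=g$ on each edge, since both are affine there and agree at the endpoints. Any interior point of $T$ lies on a chord joining two edge points, so $f=g$ there as well.
\item For arbitrary $q\in K$, take a line through $q$ meeting the interior of $T$ in a nondegenerate segment. On that line $f$ and $g$ are both affine and agree on a nontrivial interval, so they agree at $q$.
\end{enumerate}
This gives $f=g$ on $K=W(A)$, as required.

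The main obstacle I expect is ensuring that the chord-wise statement is truly available, namely that the restriction of $f$ to $W(A_M)$ is affine as a function on that set. When $A_M$ is non-normal this holds because the ellipse has interior. In the normal case, Corollary \ref{affine1} gives $B_M=\alpha A_M+\gamma I$, so $f$ is affine along the segment. Either way, $f$ is affine on the chord $[\phi_A(x),\phi_A(y)]\subset W(A_M)$. This relies on Toeplitz--Hausdorff applied to $A_M$, which guarantees the whole chord lies in $W(A_M)$. With that in hand, the convex-geometry gluing above completes the proof.
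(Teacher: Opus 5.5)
Your proposal is correct and follows the paper's route. You compress to $M=\mathrm{span}\{x,y\}$ and use Lemma \ref{comp2}, Corollary \ref{affine1} and Toeplitz--Hausdorff for $A_M$ to get that $f$ is affine on every chord of $W(A)$. The one difference is that you explicitly prove the convex-geometry step (chord-wise affine implies $f$ is the restriction of a single real-affine map of $\mathbb{R}^2$), which the paper leaves implicit in the phrase ``preserves affine combinations.''
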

       Proof: Take $a,b\in W(A)$ and $t\in (0,1)$. We have $x,y\in S_H$, necessarily independent, such that $\phi_A(x)=a$ and $\phi_A(y)=b$. Let $M$ be the two-dimensional subspace spanned by $x$ and $y$. By the convexity of the numerical range, $ta+(1-t)b\in W(A_M)$, and by our work in the two-dimensional case, the map sending $\langle A_Mx,x\rangle\to \langle B_Mx,x\rangle$ is affine. Hence, the connecting map from $A$ to $B$ preserves affine combinations. $\square$  
            \begin{cor}\label{lincomb} If $B\leq A$, then 
    \begin{equation*}
        B=\alpha Re(A)+\beta Im(A)+\gamma I
    \end{equation*}
    for some scalars $\alpha,\beta,\gamma\in\mathbb{C}$. \end{cor}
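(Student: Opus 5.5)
The plan is to turn the affineness of the connecting map from Theorem \ref{affine2} into an operator identity. Suppose $B\leq A$ with connecting map $f:W(A)\to W(B)$. Identify $\mathbb{C}$ with $\mathbb{R}^2$. By Theorem \ref{affine2}, $f$ is affine on the convex set $W(A)$. Such a map extends to a real-affine map of $\mathbb{R}^2$, although the extension need not be unique when $W(A)$ is degenerate. So I will write
\begin{equation*}
    f(u+iv)=(a_1+ia_2)u+(b_1+ib_2)v+(c_1+ic_2)
\end{equation*}
for all $u+iv\in W(A)$, with real constants $a_1,a_2,b_1,b_2,c_1,c_2$. This representation is the real content of the corollary. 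I expect it to be the main obstacle, because Theorem \ref{affine2} only gives preservation of convex combinations on $W(A)$, not a formula.

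To obtain the formula from preservation of convex combinations, I will split into cases on the affine dimension of $W(A)$, which is convex by Toeplitz--Hausdorff.
\begin{itemize}
\item If $W(A)$ is a point, any constant works.
\item If $W(A)$ is a segment, I parametrize it by $p+tq$ and note that $t\mapsto f(p+tq)$ preserves convex combinations on an interval. Such a map is of the form $t\mapsto \alpha t+\beta$, so $f$ extends affinely.
\item If $W(A)$ has nonempty interior, I pick three affinely independent points $z_0,z_1,z_2\in W(A)$ and let $g$ be the unique real-affine map of $\mathbb{R}^2$ agreeing with $f$ on them. Then $f=g$ on the triangle they span, because both preserve convex combinations. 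For an arbitrary $w\in W(A)$, I choose a small triangle around an interior point $c$ of the triangle, such that $c$ is a convex combination of $w$ and a point of the triangle. Preservation of convex combinations along that segment then forces $f(w)=g(w)$, using $t\neq 0$ to solve for $f(w)$.
\end{itemize}

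Given the formula, I compose with $\phi_A$ and use Corollary \ref{decomp}, $\phi_A=\phi_{Re(A)}+i\phi_{Im(A)}$, where $\phi_{Re(A)}$ and $\phi_{Im(A)}$ are real-valued. For every $x\in S_H$ this gives
\begin{equation*}
    \phi_B(x)=f(\phi_A(x))=(a_1+ia_2)\phi_{Re(A)}(x)+(b_1+ib_2)\phi_{Im(A)}(x)+(c_1+ic_2).
\end{equation*}
Set $\alpha=a_1+ia_2$, $\beta=b_1+ib_2$, $\gamma=c_1+ic_2$, and $C=\alpha Re(A)+\beta Im(A)+\gamma I$. By Proposition \ref{basics} (i), (ii) the right-hand side equals $\phi_C(x)$. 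Hence $\phi_{B-C}\equiv 0$ on $S_H$. By Proposition \ref{basics} (iii) with $\lambda=0$, $B=C$, which completes the proof.
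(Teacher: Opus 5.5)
Your proof is correct and follows the paper's route. You write the affine connecting map from Theorem \ref{affine2} as $u+iv\mapsto(a_1+ia_2)u+(b_1+ib_2)v+(c_1+ic_2)$, substitute $\phi_A=\phi_{Re(A)}+i\phi_{Im(A)}$, and conclude $B=\alpha Re(A)+\beta Im(A)+\gamma I$ because the numerical range maps coincide. The one difference is that you justify a step the paper only asserts: a map preserving convex combinations on the convex set $W(A)$ is the restriction of a real-affine map of $\mathbb{R}^2$. Your case analysis on the dimension of $W(A)$ establishes this correctly.
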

    Proof: As the connecting map from $A$ to $B$ is affine, there exists a real valued matrix and vector such that 
\begin{equation*}
    f\left(\begin{bmatrix}
        x_1\\ x_2
    \end{bmatrix}\right)=\begin{bmatrix}
        a_1 & b_1 \\ a_2 & b_2
    \end{bmatrix}\begin{bmatrix}
        x_1\\ x_2
    \end{bmatrix}+\begin{bmatrix}
        c_1 \\ c_2
    \end{bmatrix}.
\end{equation*}
So $\forall x\in S_H$ we have 
\begin{align*}
    \langle Bx,x\rangle&= f\left(\langle Ax,x\rangle\right)\\
    &=f\left(\begin{bmatrix}
        \langle Re(A)x,x\rangle \\ \langle Im(A)x,x\rangle
    \end{bmatrix}\right)\\
    &=\begin{bmatrix}
        a_1\langle Re(A)x,x\rangle +b_1 \langle Im(A)x,x\rangle +c_1\\
        a_2\langle Re(A)x,x\rangle + b_2 \langle Im(A)x,x\rangle +c_2
    \end{bmatrix}\\
    &= a_1\langle Re(A)x,x\rangle +b_1 \langle Im(A)x,x\rangle +c_1+i(a_2\langle Re(A)x,x\rangle + b_2 \langle Im(A)x,x\rangle +c_2)\\
    &=\langle ((a_1+ia_2)Re(A)+(b_1+ib_2)Im(A)+(c_1+ic_2)I)x,x\rangle.
\end{align*}
As the numerical range maps are identical, we have 
\begin{equation*}
    B=((a_1+ia_2)Re(A)+(b_1+ib_2)Im(A)+(c_1+ic_2)I).\square
\end{equation*} 
\begin{cor}If $B\leq A$ and $\phi_A(x)$ is an extreme point of $W(A)$, then $\phi_B(x)$ is an extreme point of $W(B)$.\end{cor}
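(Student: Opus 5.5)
The plan is to reduce the statement to a property of the connecting map $f$. By Theorem \ref{affine2}, $f$ is affine, and by Corollary \ref{lincomb} it is given by a real $2\times 2$ matrix $L=\begin{bmatrix} a_1 & b_1\\ a_2 & b_2\end{bmatrix}$ plus a translation. Also $f(W(A))=W(B)$, since connecting maps are surjective. Set $z=\phi_A(x)$, an extreme point of $W(A)$, and $w=\phi_B(x)=f(z)$. I would argue by contradiction. Suppose $w=tu+(1-t)v$ with $u\neq v$ in $W(B)$ and $t\in(0,1)$. By surjectivity, pick $a,b\in W(A)$ with $f(a)=u$ and $f(b)=v$. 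By affineness, $f(ta+(1-t)b)=w=f(z)$.

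\textbf{Case $L$ invertible.} Then $f$ is injective, so $z=ta+(1-t)b$ with $a\neq b$. This contradicts the extremality of $z$, and the corollary follows.

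\textbf{Case $L$ singular.} This is the main obstacle, because the previous paragraph only yields $f(z)=f(ta+(1-t)b)$. Here $f(W(A))$ is a segment or a point. If it is a point, then $W(B)=\{w\}$, and $w$ is trivially extreme. If it is a nondegenerate segment, then $f$ is constant along the direction $\ker L$. I would need to show that the fiber $f^{-1}(w)\cap W(A)$, which is a chord of $W(A)$ through $z$ parallel to $\ker L$, meets the relative boundary of $W(A)$ only at points mapping to an endpoint of $W(B)$. The tool I would try first is Lemma \ref{comp2}: compress to the span $M$ of $x$ and a preimage of an interior point of the chord. Then I would apply the two-dimensional classification of Proposition \ref{dimension} together with Lemma \ref{tsing} and Lemma \ref{norm1}, to control how many vectors $\phi_{A_M}$ and $\phi_{B_M}$ identify.

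Before committing to this, I would test the singular case against the nilpotent example $A=\begin{bmatrix}0&1\\0&0\end{bmatrix}$, $B=Re(A)$. Here $W(A)$ is the closed disk of radius $1/2$, and $W(B)=[-1/2,1/2]$. The unit vector $x$ with $\phi_A(x)=i/2$, an extreme point of the disk, has $\phi_B(x)=0$, which is an interior point of $W(B)$. If this check holds up, the singular case cannot be rescued by the compression argument. The statement would then be true exactly when $f$ is injective, that is, when $\det L\neq 0$ or $W(B)$ is a single point. My write-up would therefore give the injective argument in full, and would record this example as showing that the remaining case fails as stated.
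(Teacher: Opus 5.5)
Your check holds up, so the statement is false as written and no route can prove it. Take $A$ to be the $2\times 2$ nilpotent Jordan block and $x=\frac{1}{\sqrt{2}}(1,i)^T$.
\begin{itemize}
\item $\phi_A(x)=x_2\overline{x_1}=i/2$. This is a boundary point, hence an extreme point, of the disk $W(A)=\{|z|\le 1/2\}$.
\item $\phi_{Re(A)}(x)=Re(i/2)=0$. This is the midpoint of $W(Re(A))=[-1/2,1/2]$, so it is not extreme.
\item $Re(A)\leq A$ holds by the paper's own Lemma \ref{velements}, with connecting map $\pi_x$.
\end{itemize}

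The paper's proof consists only of the sentence ``this is a property of affine transformations.'' That is exactly the step you flagged. Only affine maps that are injective on the relevant set carry extreme points to extreme points, and $\pi_x$ on a disk is the standard counterexample. What does survive for a general affine $f$ is the reverse direction. If $w$ is extreme in $W(B)$, then $f^{-1}(w)\cap W(A)$ is a face of $W(A)$. So when $W(A)$ is compact, every extreme point of $W(B)$ is the image of an extreme point of $W(A)$. Your argument in the invertible case is correct and is the right salvage. It uses only affineness, surjectivity, and convexity of $W(A)$.

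One correction to your last sentence: ``true exactly when $f$ is injective'' overstates things.
\begin{itemize}
\item Injectivity should be measured on $W(A)$, not through $\det L$. When $W(A)$ is a segment, for example when $A=A^*$, the matrix $L$ in Corollary \ref{lincomb} is not unique. It can be chosen singular, for instance with $b_1=b_2=0$, even though $f$ is injective on $W(A)$.
\item Injectivity on $W(A)$ is sufficient but not necessary. Take $A=\mathrm{diag}(1+i,1-i,-1+i,-1-i)$ and $B=Re(A)$. Then $W(A)$ is a square, and the connecting map $\pi_x$ is not injective on it. Still, the only unit vectors sent to vertices are eigenvectors, and these are sent to $\pm 1$, the endpoints of $W(B)$.
\end{itemize}
The accurate statement is that the conclusion holds whenever $f$ is injective on $W(A)$ or $W(B)$ is a single point, and it can fail otherwise, as your example shows.
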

Proof: This is a property of affine transformations. $\square$

\begin{cor}If $B\leq A$ and $A\leq B$, then $W(A)$ is homeomorphic to $W(B)$. \end{cor}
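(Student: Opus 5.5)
We want: if $B\leq A$ and $A\leq B$, then $W(A)$ and $W(B)$ are homeomorphic. The plan is to show that the connecting maps are mutually inverse, so that each one is a continuous bijection whose inverse is also continuous.

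By Theorem \ref{affine2}, the connecting map $f:W(A)\to W(B)$ with $\phi_B=f\circ\phi_A$ is affine. Likewise the connecting map $g:W(B)\to W(A)$ with $\phi_A=g\circ\phi_B$ is affine. Viewing $\mathbb{C}$ as $\mathbb{R}^2$, each is the restriction of a real affine map $\mathbb{R}^2\to\mathbb{R}^2$ (the paper writes it as a real $2\times 2$ matrix plus a vector, as in Corollary \ref{lincomb}). Such maps are continuous, so $f$ and $g$ are continuous on $W(A)$ and $W(B)$ with the subspace topology.

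Next we show $g\circ f=\mathrm{Id}_{W(A)}$ and $f\circ g=\mathrm{Id}_{W(B)}$. Take any $z\in W(A)$ and write $z=\phi_A(x)$ with $x\in S_H$. Then
\begin{equation*}
g(f(z))=g(f(\phi_A(x)))=g(\phi_B(x))=\phi_A(x)=z.
\end{equation*}
The symmetric computation gives $f(g(w))=w$ for every $w\in W(B)$. Hence $f$ is a bijection with continuous inverse $g$, that is, a homeomorphism $W(A)\to W(B)$.

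There is no real obstacle here. The only step needing care is the continuity of an affine map defined only on a subset of $\mathbb{C}$. This follows because Theorem \ref{affine2} shows $f$ preserves affine combinations on the convex set $W(A)$, and such a map agrees with the restriction of a global real-affine map. That extension is exactly the representation used in Corollary \ref{lincomb}, so we may cite it directly.
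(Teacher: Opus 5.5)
Your proof is correct and follows the paper's argument. The paper's proof just asserts that the connecting map is an invertible affine map and hence a homeomorphism. You supply the two details it leaves implicit: that $g\circ f=\mathrm{Id}_{W(A)}$ and $f\circ g=\mathrm{Id}_{W(B)}$, which holds because $\phi_A$ and $\phi_B$ map onto their numerical ranges (the same computation the paper uses later for $\Sigma_A=\Sigma_B$), and that these affine maps are continuous because they are restrictions of real-affine maps of $\mathbb{R}^2$.
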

Proof: Since the map connecting $W(A)$ to $W(B)$ is an invertible affine map, it is a homeomorphism between $W(A)$ and $W(B)$. $\square$
\section{Numerical Information of an Operator}\label{numinfo}
\begin{defi}\label{numinf} For $A\in B(H)$ the Numerical Information of $A$, denoted $\Sigma_A$, is the set
\begin{equation*}
    \Sigma_A:=\{\phi_A^{-1}(U)\;|\;U\subset W(A)\text{ is measurable}\}.
\end{equation*}\end{defi}
We need the following important lemma to prove the remaining results. \\\\
\textbf{Doob-Dynkin Lemma} $\Sigma_B\subset \Sigma_A$ if and only if there exists a measurable function $f$ such that $\phi_B=f\circ \phi_A$. \\
Proof: See \cite{modprob}.
\begin{thm}\label{inclusion}For $A,B\in B(H)$, $\Sigma_B\subset \Sigma_A$ if and only if \begin{equation*}
    B=\alpha Re(A)+\beta Im(A)+\gamma I
\end{equation*}.\end{thm}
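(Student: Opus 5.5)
The proof will go in both directions through the Doob-Dynkin Lemma, which turns inclusion of numerical information into the existence of a measurable connecting map. Theorem \ref{affine2} and Corollary \ref{lincomb} then do the real work.

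For the forward direction, suppose $\Sigma_B\subset\Sigma_A$.
\begin{itemize}
\item By the Doob-Dynkin Lemma there is a measurable $f$ with $\phi_B=f\circ\phi_A$.
\item Restricting $f$ to $W(A)$ gives a connecting map in the sense of Definition \ref{relation}. It is automatically well defined on $W(A)$, since it is determined by $\phi_B$ on fibers of $\phi_A$, and it maps onto $W(B)$. Hence $B\leq A$.
\item Corollary \ref{lincomb}, which rests on Theorem \ref{affine2}, then gives $B=\alpha Re(A)+\beta Im(A)+\gamma I$.
\end{itemize}
Measurability of $f$ is not even needed beyond its existence: affineness comes for free from the two-dimensional compression argument.

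For the converse, suppose $B=\alpha Re(A)+\beta Im(A)+\gamma I$.
\begin{itemize}
\item By Proposition \ref{basics} and Corollary \ref{decomp},
\begin{equation*}
\phi_B(x)=\alpha\phi_{Re(A)}(x)+\beta\phi_{Im(A)}(x)+\gamma=\alpha\,\mathrm{Re}(\phi_A(x))+\beta\,\mathrm{Im}(\phi_A(x))+\gamma .
\end{equation*}
\item So $\phi_B=f\circ\phi_A$, where $f(z)=\alpha\,\mathrm{Re}(z)+\beta\,\mathrm{Im}(z)+\gamma$. This is a real-affine map $\mathbb{C}\to\mathbb{C}$.
\item Such an $f$ is continuous, hence Borel measurable. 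Lemma \ref{velements} together with the vector space structure of $V_A$ already records the underlying identity.
\item Doob-Dynkin then yields $\Sigma_B\subset\Sigma_A$. Concretely, for measurable $U\subset W(B)$ we have $\phi_B^{-1}(U)=\phi_A^{-1}(f^{-1}(U)\cap W(A))$, and $f^{-1}(U)\cap W(A)$ is a measurable subset of $W(A)$.
\end{itemize}

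The only real obstacle is bookkeeping about which $\sigma$-algebra on $W(A)$ is meant by ``measurable''. I will take the Borel sets, so that continuous maps are measurable and the Doob-Dynkin Lemma applies as stated. I also need the restriction of $f$ to $W(A)$ to match Definition \ref{relation}. Everything substantive has been done in Theorem \ref{affine2}.
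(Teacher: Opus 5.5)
Your proposal is correct and follows the paper's proof. In both directions you use the Doob--Dynkin Lemma, with Corollary \ref{lincomb} (resting on Theorem \ref{affine2}) giving the forward implication and the explicit real-affine map $f(z)=\alpha\,\mathrm{Re}(z)+\beta\,\mathrm{Im}(z)+\gamma$ from Corollary \ref{decomp} giving the converse; your added remarks, that measurability of $f$ plays no role in the affineness argument and that ``measurable'' should be read as Borel so that the continuous $f$ pulls measurable sets back to measurable sets, are sound clarifications of points the paper leaves implicit.
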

Proof: Assume that $B$ is of this form, from Corollary \ref{decomp}, we can construct our connecting map. By Doob-Dynkin, $\Sigma_B\subset\Sigma_A$. \\\\
Now assume that $\Sigma_B\subset\Sigma_A$. From Doob-Dynkin, the connecting map is well defined. From Corollary \ref{lincomb} we get our result. $\square$
\begin{thm}For $A,B\in 
B(H)$, $\Sigma_A=\Sigma_B$ if and only if \begin{equation*}
    B=(a_1+ia_2)Re(A)+(b_1+ib_2)Im(A)+(c_1+ic_2) I\text{ and }a_1b_2-a_2b_1\neq 0
\end{equation*}.\end{thm}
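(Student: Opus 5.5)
We prove this by applying Theorem \ref{inclusion} in both directions, noting that $\Sigma_A=\Sigma_B$ means exactly that $\Sigma_B\subset\Sigma_A$ and $\Sigma_A\subset\Sigma_B$. By Theorem \ref{inclusion}, the first inclusion holds if and only if $B=(a_1+ia_2)Re(A)+(b_1+ib_2)Im(A)+(c_1+ic_2)I$ for real $a_j,b_j,c_j$. By Corollary \ref{decomp}, this says $\phi_B=f\circ\phi_A$ with $f$ the real affine map of $\mathbb{R}^2$ with linear part
\begin{equation*}
L=\begin{bmatrix} a_1 & b_1\\ a_2 & b_2\end{bmatrix}
\end{equation*}
and translation $(c_1,c_2)$. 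This is the computation already carried out in Corollary \ref{affine1} and Corollary \ref{lincomb}. So the content of the theorem is that, given such a representation, $\Sigma_A\subset\Sigma_B$ holds precisely when $\det L=a_1b_2-a_2b_1\neq 0$.

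($\Leftarrow$) Suppose $\det L\neq 0$. Then $f$ is an invertible affine map of $\mathbb{R}^2=\mathbb{C}$, and $\phi_A=f^{-1}\circ\phi_B$. Write $f^{-1}(w)=L^{-1}w-L^{-1}c$ in real coordinates and read off
\begin{equation*}
\phi_{Re(A)}=\pi_x\circ f^{-1}\circ\phi_B,\qquad \phi_{Im(A)}=\pi_y\circ f^{-1}\circ\phi_B .
\end{equation*}
Using Corollary \ref{decomp} for $B$, each is a real combination of $\phi_{Re(B)},\phi_{Im(B)}$ and the constant $1$. By Proposition \ref{basics}(iii) this gives $Re(A),Im(A)\in \mathrm{span}\{Re(B),Im(B),I\}$, hence $A=Re(A)+iIm(A)$ is of the required form in terms of $B$. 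Theorem \ref{inclusion} then yields $\Sigma_A\subset\Sigma_B$. One could also apply Doob--Dynkin directly with the measurable map $f^{-1}$.

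($\Rightarrow$) Suppose $\Sigma_A=\Sigma_B$. From $\Sigma_A\subset\Sigma_B$, Doob--Dynkin and Theorem \ref{affine2} give an affine $g$ with $\phi_A=g\circ\phi_B$. Then $\phi_A=(g\circ f)\circ\phi_A$, so $g\circ f$ restricts to the identity on $W(A)$. If $A$ is not of the form $\lambda Re(A)$-degenerate — that is, if $W(A)$ is not contained in a line, so that its affine hull is all of $\mathbb{C}$ — then $g\circ f=Id$ on $\mathbb{R}^2$. Hence $f$ is invertible and $\det L\neq 0$.

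\textbf{Main obstacle.} The delicate step is exactly the degenerate situation, where $W(A)$ is a segment or a point. There the coefficients $a_j,b_j$ are not uniquely determined by $B$. For instance, if $A$ is self-adjoint then $Im(A)=0$, and $b_1,b_2$ are arbitrary. In that case $\Sigma_A=\Sigma_B$ holds for $B=A$, yet one may also write $B=A$ with $a_1=1$ and $a_2=b_1=b_2=0$, giving determinant $0$. So the statement must be read as ``there exists a representation with $a_1b_2-a_2b_1\neq 0$.'' With that reading, I would handle the degenerate case as follows.

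When $Re(A),Im(A),I$ are linearly dependent, one can choose the free coefficients so that $L$ is invertible while still restricting to the needed injective affine map on the affine hull of $W(A)$. Concretely, $g\circ f=Id$ on that line (or point) forces $f$ to be injective there. Any affine map injective on a line extends to an invertible affine map of the plane agreeing on the line. By Proposition \ref{basics}(iii), the extension still represents $B$ in the form $\alpha Re(A)+\beta Im(A)+\gamma I$. The case $A=\lambda I$ is handled the same way, with the line replaced by a point. For the converse in the degenerate case, $\Leftarrow$ goes through unchanged, since invertibility of $L$ was all that was used.
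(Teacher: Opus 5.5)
Your proof is correct and follows the paper's route: for $(\Leftarrow)$ you invert the affine connecting map, and for $(\Rightarrow)$ you use that $g\circ f$ is the identity on $W(A)$ to force $f$, and hence $L$, to be invertible. Your treatment of the case where $W(A)$ lies in a line goes beyond the paper, whose final step (``as $f$ is invertible'') only gives injectivity of $f$ on $W(A)$; your example $B=A=A^*$ with $b_1=b_2=0$ shows this does not force $a_1b_2-a_2b_1\neq 0$ for every representation, so the existential reading and your extension argument are exactly what that case needs.
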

Proof: If we assume $B=(a_1+ia_2)Re(A)+(b_1+ib_2)Im(A)+(c_1+ic_2) I$ and $a_1b_2-a_2b_1\neq 0$, by Theorem \ref{inclusion}, $\Sigma_B\subset \Sigma_A$. The map connecting $B$ and $A$ is of the form
\begin{equation*}
    f\left(\begin{bmatrix}
        x_1\\x_1
    \end{bmatrix}\right)=\begin{bmatrix}
        a_1 & b_1 \\a_2 & b_2
    \end{bmatrix}\begin{bmatrix}
        x_1\\x_2
    \end{bmatrix}+\begin{bmatrix}
        c_1\\c_2
    \end{bmatrix}.
\end{equation*}
As $a_1b_2-a_2b_1\neq 0$, this map is invertible. Hence $\phi_A=f^{-1}\circ \phi_B$ and $\Sigma_A\subset \Sigma_B$. \\\\
Now, if we assume $\Sigma_A=\Sigma_B$, then there exists $f$ such that $\phi_B=f\circ \phi_A$ and $g$ such that $\phi_A=g\circ\phi_B$. It is necessary that $f\circ g=Id$ on $W(A)$ and $g\circ f=Id$ on $W(B)$ as 
\begin{equation*}
    \phi_B=f\circ\phi_A=f\circ g\circ \phi_B\text{ and }\phi_A=g\circ\phi_B=g\circ f\circ \phi_A.
\end{equation*}
As $f$ is invertible, the result follows easily from Theorem \ref{inclusion}. $\square$
\\\\
We extend the Numerical Information to the Joint Numerical Information. \\\\
For two operators $A,B\in B(H)$, we use the notation $\phi_{A,B}$ to denote the joint numerical range map of $A$ and $B$:
\begin{align*}
    \phi_{A,B}:S_H&\to \mathbb{C}\times \mathbb{C}\\
    x&\mapsto \phi_A(x)\times\phi_B(x).
\end{align*}
\begin{defi}For $A,B\in B(H)$ we define the joint numerical information of $A$ and $B$ as
\begin{equation*}
    \Sigma_{A,B}:=\{\phi_{A,B}^{-1}(U)\;|\;U\in W(A,B)\text{ measurable}\}.
\end{equation*}\end{defi}
\begin{lem}The Joint Numerical Range $\Sigma_{A,B}$ is the smallest sigma algebra that contains $\Sigma_A\cup \Sigma_B$. \end{lem}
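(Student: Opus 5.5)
The statement calls $\Sigma_{A,B}$ the "Joint Numerical Range", but it means the joint numerical information. The claim is that $\Sigma_{A,B}=\sigma(\Sigma_A\cup\Sigma_B)$. I would prove two inclusions, using only the definition of the joint map $\phi_{A,B}=(\phi_A,\phi_B)$ and the standard facts about sigma algebras generated by maps. Throughout, "measurable" means Borel in $\mathbb{C}$ and $\mathbb{C}\times\mathbb{C}$. I would note the identification $\mathcal{B}(\mathbb{C}\times\mathbb{C})=\mathcal{B}(\mathbb{C})\otimes\mathcal{B}(\mathbb{C})$, which holds because $\mathbb{C}$ is second countable. I would also note that taking traces on $W(A)$, $W(B)$, $W(A,B)$ changes nothing, since $\phi^{-1}(U)=\phi^{-1}(U\cap \text{range})$.

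\textbf{First inclusion, $\sigma(\Sigma_A\cup\Sigma_B)\subset\Sigma_{A,B}$.} Let $\pi_1,\pi_2$ be the coordinate projections $\mathbb{C}\times\mathbb{C}\to\mathbb{C}$. Then $\phi_A=\pi_1\circ\phi_{A,B}$ and $\phi_B=\pi_2\circ\phi_{A,B}$. Both projections are continuous, hence Borel. So for measurable $U\subset W(A)$,
\[
\phi_A^{-1}(U)=\phi_{A,B}^{-1}(\pi_1^{-1}(U)),
\]
and this set lies in $\Sigma_{A,B}$. The same holds for $B$; this is just the easy direction of Doob--Dynkin. Next I would check that $\Sigma_{A,B}$ is itself a sigma algebra on $S_H$, because preimages commute with complements and countable unions. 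The minimality of $\sigma(\Sigma_A\cup\Sigma_B)$ then gives the inclusion.

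\textbf{Reverse inclusion, $\Sigma_{A,B}\subset\sigma(\Sigma_A\cup\Sigma_B)$.} Consider the family
\[
\mathcal{D}=\{U\subset\mathbb{C}\times\mathbb{C}\text{ Borel}\;|\;\phi_{A,B}^{-1}(U)\in\sigma(\Sigma_A\cup\Sigma_B)\}.
\]
Since preimages respect complements and countable unions, $\mathcal{D}$ is a sigma algebra. It contains every measurable rectangle $U_1\times U_2$, because
\[
\phi_{A,B}^{-1}(U_1\times U_2)=\phi_A^{-1}(U_1)\cap\phi_B^{-1}(U_2)\in\sigma(\Sigma_A\cup\Sigma_B).
\]
Rectangles generate $\mathcal{B}(\mathbb{C})\otimes\mathcal{B}(\mathbb{C})=\mathcal{B}(\mathbb{C}^2)$, so $\mathcal{D}$ is all Borel sets, and the inclusion follows.

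The only step needing care is the identification of the Borel sigma algebra of $\mathbb{C}\times\mathbb{C}$ with the product sigma algebra. I would cite the standard fact that for second countable spaces $\mathcal{B}(X\times Y)=\mathcal{B}(X)\otimes\mathcal{B}(Y)$: every open set in $\mathbb{C}^2$ is a countable union of products of open balls. The rest is routine bookkeeping with preimages.
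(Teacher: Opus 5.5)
Your proof is correct and follows the same route as the paper. You get the easy inclusion from $\phi_A=\pi_1\circ\phi_{A,B}$ (and likewise for $B$), and the reverse inclusion from $\phi_{A,B}^{-1}(X\times Y)=\phi_A^{-1}(X)\cap\phi_B^{-1}(Y)$ together with the fact that rectangles generate the Borel sets of $\mathbb{C}\times\mathbb{C}$. Your good-sets argument with $\mathcal{D}$ and the explicit identification $\mathcal{B}(\mathbb{C}^2)=\mathcal{B}(\mathbb{C})\otimes\mathcal{B}(\mathbb{C})$ make rigorous the step the paper only asserts as ``their pullbacks generate $\Sigma_{A,B}$.''
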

Proof: We again apply Doob-Dynkin. As
\begin{equation*}
    \phi_A=\pi_1\circ\phi_{A,B}.
\end{equation*}
where $\pi_1$ denotes the projection onto the first coordinate, we have $\Sigma_A\subset \Sigma_{A,B}$. We can mimic the proof above for the containment of $\Sigma_B$. \\\\
If we pull back the product of measurable sets $X\subset W(A)$ and $Y\subset W(B)$ by the joint numerical range map we have
\begin{equation*}
    \phi_{A,B}^{-1}(X\times Y)=\phi_A^{-1}(X)\cap \phi_B^{-1}(Y),
\end{equation*}
which is the intersection of an element from $\Sigma_A$ with one from $\Sigma_B$. As products of measurable sets from $W(A)$ and $W(B)$, (necessarily intersected with $W(A,B)$), generate the sigma algebra on $W(A,B)$, their pullbacks generate $\Sigma_{A,B}$. As the generators of $\Sigma_{A,B}$ can be written as the intersection of an element of $\Sigma_A$ and $\Sigma_B$, $\Sigma_{A,B}$ is contained in the sigma algebra generated by $\Sigma_A\cup \Sigma_B$. $\square$
\begin{cor} For $A\in B(H)$, $\Sigma_A$ is the smallest sigma algebra containing $\Sigma_{Re(A)}$ and $\Sigma_{Im(A)}$.\end{cor}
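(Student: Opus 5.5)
The plan is to apply the preceding lemma with the pair $Re(A), Im(A)$ in place of $A, B$, and then identify $\Sigma_{Re(A),Im(A)}$ with $\Sigma_A$. By that lemma, $\Sigma_{Re(A),Im(A)}$ is the smallest sigma algebra containing $\Sigma_{Re(A)}\cup\Sigma_{Im(A)}$. It therefore suffices to show $\Sigma_{Re(A),Im(A)}=\Sigma_A$. We get this from Doob-Dynkin by writing each of the two numerical range maps as a measurable function of the other.

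First, by Corollary \ref{decomp},
\begin{equation*}
\phi_A=\psi\circ\phi_{Re(A),Im(A)},\qquad \psi(s,t):=s+it .
\end{equation*}
Here $\psi$ is continuous on $\mathbb{C}\times\mathbb{C}$, hence Borel measurable, so Doob-Dynkin gives $\Sigma_A\subset\Sigma_{Re(A),Im(A)}$.

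Conversely, by Lemma \ref{velements},
\begin{equation*}
\phi_{Re(A)}=\pi_x\circ\phi_A,\qquad \phi_{Im(A)}=\pi_y\circ\phi_A ,
\end{equation*}
where $\pi_x,\pi_y$ are the real and imaginary part maps. Both values lie in $\mathbb{R}\subset\mathbb{C}$ because $Re(A)$ and $Im(A)$ are self-adjoint. We use $\pi_y$ itself here rather than the paper's $-i\pi_y$, which does not give $\phi_{Im(A)}$. It follows that $\phi_{Re(A),Im(A)}=(\pi_x,\pi_y)\circ\phi_A$. The map $z\mapsto(\pi_x(z),\pi_y(z))$ is continuous from $\mathbb{C}$ into $\mathbb{C}\times\mathbb{C}$, so Doob-Dynkin gives $\Sigma_{Re(A),Im(A)}\subset\Sigma_A$. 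Alternatively, one can simply note that $\Sigma_{Re(A)}$ and $\Sigma_{Im(A)}$ are both contained in $\Sigma_A$ by Theorem \ref{inclusion}, since $Re(A)$ and $Im(A)$ trivially have the form $\alpha Re(A)+\beta Im(A)+\gamma I$. Hence the sigma algebra they generate lies in $\Sigma_A$.

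Combining the two inclusions, $\Sigma_A=\Sigma_{Re(A),Im(A)}$, which is the smallest sigma algebra containing $\Sigma_{Re(A)}$ and $\Sigma_{Im(A)}$.

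The only delicate point is measure-theoretic bookkeeping. We need the sigma algebra on $W(Re(A),Im(A))\subset\mathbb{R}^2$ to be the Borel one, matching the Borel structure on $W(A)\subset\mathbb{C}$. Under the identification $\mathbb{C}\cong\mathbb{R}^2$, the map $(s,t)\mapsto s+it$ is a homeomorphism from $W(Re(A),Im(A))$ onto $W(A)$, so measurability in both directions is immediate. With that observation the argument is routine.
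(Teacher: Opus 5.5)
Your proof is correct and is essentially the argument the paper leaves implicit: you apply the joint-information lemma to the pair $Re(A),Im(A)$, then identify $\Sigma_{Re(A),Im(A)}$ with $\Sigma_A$ through the homeomorphism $(s,t)\mapsto s+it$ from $W(Re(A),Im(A))$ onto $W(A)$. (Your aside about $-i\pi_y$ is only a notational point, since the paper's formula is correct if $\pi_y$ denotes projection of $\mathbb{C}$ onto the imaginary axis, $z\mapsto i\,\mathrm{Im}\,z$.)
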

 
\section{Acknowledgments}
The author gratefully acknowledges support from the Erwin and Peggy Kleinfeld Graduate Fellowship fund, support from a GAANN fellowship (U.S. Department of Education), and Dr. Palle. E.T Jorgensen for his guidance.

\end{document}